\documentclass[11pt,reqno]{amsart}

\usepackage{amsmath, amsfonts, amsthm, amssymb}

\textwidth=15.0cm \textheight=21.0cm \hoffset=-1.1cm \voffset=-0.5cm

\newtheorem{Theorem}{Theorem}[section]
\newtheorem{Lemma}{Lemma}[section]

\theoremstyle{definition}
\newtheorem{Definition}{Definition}[section]

\theoremstyle{remark}
\newtheorem{Remark}{Remark}[section]

\numberwithin{equation}{section}

\def\va{\varphi}
\renewcommand{\u}{{\bf u}}

\newcommand{\R}{{\mathbb R}}
\newcommand{\Dv}{{\nabla\,\cdot\,}}

\newcommand{\x}{{\bf x}}

\newcommand{\dl}{\delta}

\def\f{\frac}
\renewcommand{\O}{\Omega}

\def\D{\Delta }


\def\hf1{^\f{1}{1-\xi^2}}

\def\be{\begin{equation}}
\def\en{\end{equation}}
\def\bs{\begin{split}}
\def\es{\end{split}}

\renewcommand{\d}{{\bf d}}
\newcommand{\F}{{\bf d}}

\author{Xiaoli Li and\ Dehua Wang}
\address{Department of Mathematical Sciences,  Tsinghua University,
                           Beijing 100084, China; and
                           Department of Mathematics, University of Pittsburgh,
                           Pittsburgh, PA 15260, USA.}
\email{xllithu@gmail.com}

\address{Department of Mathematics, University of Pittsburgh,
                           Pittsburgh, PA 15260, USA.}
\email{dwang@math.pitt.edu}

\title[Global Solution to Liquid Crystal Flow]
{Global Solution to the Incompressible \\ Flow of Liquid
 Crystals}

\subjclass[2000]{35A05, 76A10, 76D03.}
\keywords{ Liquid crystals, incompressible flow, strong solution, existence and
uniqueness}

\date{\today}

\begin{document}

\begin{abstract}
The initial boundary value problem for the three-dimensional incompressible flow of liquid crystals is considered in a bounded smooth domain.
The existence and uniqueness is established for both the local
strong solution with large initial data and the global strong
solution with small data.  It is also proved that when the strong
solution exists, a weak solution must be equal to the unique strong
solution with the same data.
\end{abstract}
\maketitle

\section{Introduction}

Liquid crystals  are a state of matter that have properties
between those of a conventional liquid and those of a solid crystal
that are optically anisotropic, even when they are at rest. In this
work, we are interested in a Navier-Stokes type model for
incompressible fluids that takes into account the crystallinity of
the fluid molecules in the three-dimensional case, that is, a nematic liquid
crystal model, which can be governed by the following nonlinear
hydrodynamical system (see \cite{JLE,Leslie1,Lin2} and references
therein):
\begin{subequations}\label{e2}
\begin{align}
&\f{\partial\u}{\partial t}+\u\cdot\nabla\u-\mu\D\u+\nabla P
       =-\lambda\nabla\cdot\left(\nabla \d\odot\nabla\d\right), \label{e21}\\
&\f{\partial\d}{\partial t}+\u\cdot\nabla\d=\gamma\left(\D\d+|\nabla\d|^2\d\right), \label{e22}\\
&\Dv\u=0, \label{e23}
\end{align}
\end{subequations}
where $\u\in\R^3$ denotes the velocity, $\d\in \mathbb{S}^2$ (the unit sphere in $\R^3$)  the unit-vector field that represents the
 macroscopic/continuum molecular orientations,
 $P\in\R$ is the pressure (including both the hydrostatic part and the induced elastic part from the orientation field) arising from
  the incompressibility $\nabla\cdot\u=0$;  and they all depend on the spatial variable
 $\x=(x_1,x_2,x_3)\in\R^3$ and the time variable $t>0$.
 The term $\lambda\nabla\cdot\left(\nabla\d\odot\nabla\d\right)$ in the stress tensor represents the
 anisotropic feature of the system.
 The positive constants $\mu, \lambda, \gamma$ stand for viscosity, the competition between kinetic energy and potential energy,
 and microscopic elastic relaxation time or the Debroah number for the molecular orientation field, respectively. We set these three constants to be one since their exact values do not play any role in our analysis.
The symbol $\nabla\d\odot\nabla\d$ denotes a matrix whose $(i,j)$-th
entry is $\partial_{x_i} \d\cdot\partial_{x_j}\d$ for $1\leq i,j\leq
3$, and it is easy to see that $\nabla\d\odot\nabla\d=(\nabla
\d)^\top\nabla\d,$ where $(\nabla \d)^\top$ denotes the transpose of
the $3\times 3$ matrix $\nabla \d$.

System \eqref{e2} is a simplified version, but still retains most of
the interesting mathematical properties (without destroying the
basic nonlinear structure) of the original Ericksen-Leslie model
(\cite{Eri, Eri2, HK, HKL, Leslie1, Lin2}) for the hydrodynamics of
nematic liquid crystals; see \cite{LL,LW,SL} for more discussions on
the relations of the two models. Both the Ericksen-Leslie system and
the simplified one describe the macroscopic continuum time evolution
of liquid crystal materials under the influence of both the velocity
$\u$ and the orientation  $\d$ which can be derived
from the averaging/coarse graining of the directions of rod-like
liquid crystal molecules. In particular, there is a force term in
the $\u$-system depending on $\d$; the left hand side of the
$\d$-system stands for the kinematic transport by the flow field
while the right hand side represents the internal relaxation due to
the elastic energy.
In many situations, the flow velocity field does disturb the alignment
of the molecule, and in turn, a change in the alignment will induce
velocity.  

We consider the initial-boundary value problem of system \eqref{e2}
in a bounded smooth domain $\O\subset\R^3$ with the initial condition:
\begin{equation}\label{ic}
(\u, \d)\mid_{t=0}=(\u_0(\x), \d_0(\x)), \quad  \x\in {\Omega},
\end{equation}
and the boundary condition:
\begin{equation}\label{bc}
(\u,\partial_{\bf \nu}\d)\mid_{\partial\Omega}=(0,0), \quad t>0,
\end{equation}
where ${\bf \nu}$ is the outer unit-normal vector field on $\partial\O$,
$\u_0: \O\rightarrow\R^3$, and $\d_0: \O\rightarrow \mathbb{S}^2$  are given with compatability;
 for the velocity $\u$ the non-slip boundary condition, i.e., homogeneous
Dirichlet type, is considered,  and for the orientation vector $\d$ the homogeneous Neumann
boundary condition 
is posed here.

Roughly speaking, system \eqref{e2} is a coupling between the incompressible Navier-Stokes equations
and the transported flow of harmonic maps.
There have been many studies on system \eqref{e2}, see \cite{HK, HKL, Lin2, LL, LL2, LLW, LS, LW, SL, XZL} and the references therein. Recently, in Lin-Lin-Wang \cite{LLW}, they established both interior and boundary regularity theorem for such a system in dimension two under smallness conditions. And, they also established the existence of global weak solutions that are smooth away from at most finitely many singular times in any bounded smooth domain of $\R^2$.
In Lin-Liu \cite{LL}, they addressed both the regularity and existence of global weak solutions to the $n$-dimensional ($n=2,3$) Leslie system of variable length, i.e.,  the Dirichlet energy
$$\frac{1}{2}\int_\O|\nabla\d|^2 d\x\quad \text{for}\quad \d: \O\rightarrow \mathbb{S}^{n-1}$$
 is replaced by the Ginzburg-Landau energy
 $$\int_\O\left(\frac{1}{2}|\nabla\d|^2+\frac{(1-|\d|^2)^2}{4\varepsilon^2}\right) d\x \quad \text{for}\quad
 \d: \O\rightarrow \R^n.$$
  More precisely, they proved the global existence of weak solutions with large initial
data under the assumptions that $\u_0\in L^2(\O), \d_0\in H^1(\O)$ with $\d_0|_{\partial\Omega}\in
H^{\frac{3}{2}}(\partial\Omega)$ in dimension two and three.  The existence and uniqueness of
global classical solution was also obtained if $\u_0\in H^1(\O), \d_0\in
H^2(\O)$ in dimension two or dimension three when the fluid viscosity $\mu$ is large enough.
The similar results were obtained also in \cite{SL} for a different but similar model. When weak solutions are
discussed, the partial regularity theorem of the weak solution was investigated in
\cite{LL2} (and also \cite{HKL}), similar to the classical theorem by Caffarelli-Kohn-Nirenburgh \cite{CKN} for the Navier-Stokes equations.

In this paper, we are interested in the existence and uniqueness of
global strong solution $(\u,\d, P) $ of \eqref{e2} in
$W^{2,q}(\O)^3\times W^{3,q}(\O)^3\times W^{1,q}(\O)$ with $q>3$. By
a $\textit{Strong Solution}$, we mean a triplet $(\u, \d, P)$
satisfying \eqref{e2} almost everywhere with the initial-boundary
conditions \eqref{ic}-\eqref{bc}. Our strategy to consider \eqref{e2}
is to linearize it as
\begin{subequations}\label{e4}
\begin{align}
&\f{\partial\u}{\partial t}-\D\u+\nabla
P=-{\bf v}\cdot\nabla{\bf v}-\nabla\cdot\big((\nabla{\bf f})^\top\nabla{\bf f}\big), \label{e41}\\
&\f{\partial\d}{\partial
t}-\D\d=-{\bf v}\cdot\nabla{\bf f}+|\nabla{\bf f}|^2{\bf f},\label{e42}\\
&\Dv\u=0,  \label{e43}
\end{align}
\end{subequations}
for some given functions ${\bf v}\in\R^3$ and ${\bf f}\in\R^3$. One of
the motivations of making such an linearization is that we can use
the maximal regularity of Stokes equations (cf. Theroem \ref{T4}) and the
parabolic equation (cf. Theroem \ref{T3}). We first use an iteration method to
establish the local existence and uniqueness of strong solution with
general large initial data. Then we prove the global existence by
establishing some global estimates under the condition that the
initial data are small in some norm. As system \eqref{e2} contains the Navier-Stokes equations as a subsystem, one cannot expect generally  better results than those for the Navier-Stokes equations. The uniqueness of global weak solution is still an open
problem. We shall prove that when the strong solution exists, all
the global weak solutions must be equal to
the unique strong solution, which is called the weak-strong
uniqueness. Similar results were obtained by Danchin \cite{D} for
the density-dependent incompressible Navier-Stokes equations. We
shall establish our results in the spirit of \cite{D}, while
developing new estimates for the orientation field $\d$. Due to the specific structure of the equations for $\u$, especially the strongly nonlinear term $(\nabla\d)^\top\triangle\d$ in the $\u$-system, it will be necessary to obtain more regularity for $\d$.

The rest of the  paper is organized as follows. In Section 2, we state
our main results on local and global existence of strong solution, as well as the weak-strong uniqueness.
In Section 3, we recall  the maximal regularity for Stokes equations and the parabolic equation, and also some $L^\infty$ estimates.
In Section 4, we give the proof of the local existence. In Section 5, we prove the global existence.
Finally in Section 6, we  show the weak-strong uniqueness.
\bigskip

\section{Main Results}

In this section, we state our main results.
If $k>0$ is an integer and $p\ge 1$, we denote by $W^{k,p}$ the set of functions in $L^p(\O)$ whose derivatives of up to order
$k$ belong to $L^p(\O)$.
For $T>0$ and a function space $X$, denote by $L^p(0,T; X)$ the set of Bochner measurable X-valued time dependent
functions $f$ such that $t\rightarrow \|f\|_{X}$ belongs to $L^p(0,T)$.
Let us  define the functional spaces in which the existence of solutions
is going to be obtained:

\begin{Definition}\label{df1}
For $T>0$ and $1<p, q<\infty$, we denote by $M^{p,q}_T$ the set of
triplets $(\u,\d, P)$ such that
$$\u\in C([0,T]; D_{A_q}^{1-\f{1}{p},p})\cap L^p(0,T;
W^{2,q}(\O)\cap W^{1,q}_0(\O)), \ \ \frac{\partial\u}{\partial t}\in
L^p(0,T; L^q(\O)),\ \ \Dv\u=0,$$
$$\d\in C([0,T]; B_{q,p}^{3(1-\f{1}{p})})\cap L^p(0,T;W^{3,q}(\O)), \ \ \frac{\partial\d}{\partial t}\in L^p(0,T; L^q(\O)),$$
and
$$P\in L^p(0,T; W^{1,q}(\O)), \ \ \int_\O P \ d\x=0.$$
The corresponding norm is denoted by $\|\cdot\|_{M^{p,q}_T}$.
\end{Definition}

We remark that the condition $\int_\O P\ d\x=0$ in the definition
\eqref{df1} holds automatically if we replace $P$ by
$$P-\f{1}{|\O|}\int_\O P\ d\x$$ in \eqref{e2}.
Also, in the above definition, the space $D_{A_q}^{1-\f{1}{p},p}$
stands for some fractional domain of the Stokes operator in $L^q$
(cf. Section 2.3 in \cite{D}). Roughly, the vector-fields of
$D_{A_q}^{1-\f{1}{p},p}$ are vectors which have  $2-\f{2}{p}$
derivatives   in $L^q$, are divergence-free, and vanish on
$\partial\O$. The Besov space (for definition, see \cite{BL})
$B_{q,p}^{3(1-\f{1}{p})}$ can be regarded as the interpolation space
between $L^q$ and $W^{3,q}$, that is,
$$B_{q,p}^{3(1-\f{1}{p})}=(L^q, W^{3,q})_{1-\f{1}{p},p}.$$ Moreover,  we note that
$B_{q,p}^{3(1-\f{1}{p})}\hookrightarrow W^{1,q}$ if $p\geq \frac{3}{2}$. By the imbedding
$W^{1,q}\hookrightarrow L^\infty \ \textrm {as}\  q>3$, one has $B_{q,p}^{3(1-\f{1}{p})}\hookrightarrow L^\infty$, which will be used repeatly in this paper.

\vspace{2mm}

The local existence will be shown by using an iterative method, and
if the initial data are sufficiently small in some suitable function
spaces, the solution is indeed global in time. More precisely, our
existence result reads:

\begin{Theorem}\label{T1}
Let $\O$ be a bounded smooth domain in $\R^3$. Assume $\frac{3}{2}\leq p,q<\infty$ with $\f{p}{2}(1-\f{3}{q})\in
(0,1)$ and $\u_0\in D_{A_q}^{1-\f{1}{p},p}, \, \d_0\in
B_{q,p}^{3(1-\f{1}{p})}$. Then,
\begin{enumerate}
\item There exists $T_0>0$,  such that, system \eqref{e2} with the initial-boundary conditions \eqref{ic}-\eqref{bc} has a
unique strong solution $(\u, \d, P)\in M^{p,q}_{T_0}$ in $\O\times
(0, T_0)$;
\item Moreover,  there exists  $\delta_0>0$, such that, if the initial data  satisfy
$$\|\u_0\|_{D_{A_q}^{1-\f{1}{p},p}}\le\delta_0, \quad
\|\d_0\|_{B_{q,p}^{3(1-\f{1}{p})}}\le\delta_0,$$ then
\eqref{e2}-\eqref{bc} has a unique strong solution $(\u, \d, P)\in
M^{p,q}_{T}$ in $\O\times (0, T)$ for all  $T>0$.
\end{enumerate}
\end{Theorem}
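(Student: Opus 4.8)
The plan is to prove both parts by a fixed-point/iteration argument built on the linearized system \eqref{e4}, exploiting the maximal regularity theorems for the Stokes system (Theorem~\ref{T4}) and the parabolic equation (Theorem~\ref{T3}). I would set up the solution map $\Phi:(\v,\F)\mapsto(\u,\d)$, where, given $(\v,\F)$ in a suitable closed ball of $M^{p,q}_T$, one solves the \emph{linear} problems \eqref{e41}--\eqref{e43} and \eqref{e42} for $(\u,P)$ and $\d$ respectively, with the right-hand sides
\begin{equation}\label{rhs}
\F_1=-\v\cdot\na\v-\na\cdot\big((\na\F)^\top\na\F\big),\qquad
\F_2=-\v\cdot\na\F+|\na\F|^2\F.
\end{equation}
The maximal regularity theorems then give $(\u,P)$ and $\d$ in the correct spaces with a quantitative bound $\|\Phi(\v,\F)\|_{M^{p,q}_T}\le C\big(\|\u_0\|,\|\d_0\|\big)+C\,\mathcal N(\v,\F)$, where $\mathcal N$ collects the norms of the nonlinear terms in \eqref{rhs}. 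The crux is to show $\Phi$ maps a ball into itself and is a contraction, for which I need to estimate $\mathcal N$ in $L^p(0,T;L^q)$.

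For the nonlinear estimates I would use the embeddings recorded after Definition~\ref{df1}: since $q>3$ and $p\ge\frac32$, both $\v$ and $\F$ (and $\na\F$) live in spaces embedding into $L^\infty$, so the quadratic terms $\v\cdot\na\v$, $\v\cdot\na\F$, $|\na\F|^2\F$ are controlled by products of an $L^\infty$-in-space factor and an $L^q$ factor, uniformly on $[0,T]$. The genuinely delicate term is $\na\cdot\big((\na\F)^\top\na\F\big)$, which after differentiation contains $(\na\F)^\top\na^2\F$: this is precisely the strongly nonlinear coupling flagged in the introduction, and it forces one to control $\na\F$ in $L^\infty$ while $\na^2\F$ sits in $L^q$. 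This is exactly why the $\d$-equation is solved in $W^{3,q}$ rather than $W^{2,q}$ — the extra derivative yields $\na\F\in L^\infty(0,T;W^{1,q})\hookrightarrow L^\infty$, making the product integrable in $L^q$. I would bound all these terms by a polynomial in $R=\|(\v,\F)\|_{M^{p,q}_T}$ with a prefactor that is a positive power of $T$ (coming from Hölder in time against the $L^p(0,T)$ norms), so that for $T$ small the map is a self-map of, and a contraction on, the ball; this simultaneously yields local existence and, via the contraction, uniqueness in $M^{p,q}_{T_0}$, proving part~(1). The main obstacle throughout is bookkeeping these nonlinear estimates so that every factor lands in a space the maximal-regularity output actually controls, and extracting a genuine positive power of $T$ in front of each term.

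For the difference estimates underlying the contraction, I would subtract the equations for two iterates and observe that each nonlinearity in \eqref{rhs} is bilinear (or trilinear), so differences factor as (bounded factor)$\times$(difference), giving $\|\Phi(\v_1,\F_1)-\Phi(\v_2,\F_2)\|_{M^{p,q}_T}\le C T^{\a}\,R\,\|(\v_1,\F_1)-(\v_2,\F_2)\|_{M^{p,q}_T}$ for some $\a>0$; choosing $T_0$ small makes the constant less than one. The condition $\frac{p}{2}(1-\frac3q)\in(0,1)$ is what guarantees the relevant embedding/interpolation inequalities hold with the correct time-integrability, so I would verify it enters precisely at the step where $\na\F$ is traded into $L^\infty$ and where the time-Hölder exponent is computed. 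Finally, I must confirm that the fixed point $(\u,\d)$ of the linearized system solves the \emph{original} system \eqref{e2}; this requires checking that $|\na\d|^2\d$ in \eqref{e22} matches $|\na\F|^2\F$ at the fixed point and, importantly, that $|\d|=1$ is preserved — the latter follows by testing the $\d$-equation against $\d$ and using $\d_0\in\mathbb S^2$, a standard maximum-principle-type argument I would include as a lemma.

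For part~(2), the scheme is to promote the local solution to a global one by a continuation argument driven by \emph{global-in-time a priori estimates} valid under the smallness of the data. Here the plan is to prove that there exists $\delta_0>0$ and a threshold $R_0$ such that, if $\|\u_0\|_{D_{A_q}^{1-1/p,p}}\le\delta_0$ and $\|\d_0\|_{B_{q,p}^{3(1-1/p)}}\le\delta_0$, then any solution satisfies $\|(\u,\d,P)\|_{M^{p,q}_T}\le R_0$ independently of $T$. The key is that the nonlinear terms are at least quadratic, so on the smallness regime the a priori bound closes in the form $R(T)\le C\delta_0+C R(T)^2$; a standard bootstrap (continuity of $R(T)$, $R(0)\lesssim\delta_0$) then traps $R(T)$ below a fixed constant for all $T$ as long as $C\delta_0<\frac14$. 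Combining this uniform bound with the local existence time, which depends only on the size of the data and hence stays bounded below along the solution, allows one to iterate the local result on successive intervals without shrinking the step, yielding a solution on $[0,T]$ for every $T>0$. The main obstacle in part~(2) is establishing the \emph{global} (T-independent) nonlinear estimates in the $M^{p,q}_T$ norm — unlike the local case one cannot use powers of $T$ to absorb constants, so the smallness of the solution itself must do the absorbing, and care is needed to ensure the Besov/fractional-domain norms of $(\u,\d)$ at each time stay small so the continuation does not lose smallness of the data.
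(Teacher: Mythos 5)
Your overall architecture coincides with the paper's: iterate (equivalently, contract) on the linearized system \eqref{e4} using the maximal regularity of Theorems \ref{T3} and \ref{T4}, close the local estimates with a positive power of $T$, recover $|\d|=1$ by testing the $\d$-equation, and for the global part close a $T$-independent bound of the form $H\le C(U^0+H^2)$ under smallness and continue. However, there is a genuine gap in how you propose to carry out the nonlinear estimates, precisely at the term you yourself single out as delicate. You control $(\nabla\d)^\top\nabla^2\d$ by putting $\nabla\d$ in $L^\infty(0,T;L^\infty)$ and $\nabla^2\d$ in $L^q$; but $\d\in L^\infty(0,T;B_{q,p}^{3(1-\f{1}{p})})$ gives $\nabla\d\in L^\infty(0,T;W^{1,q})$ only when $3(1-\frac{1}{p})\ge 2$, i.e.\ $p\ge 3$, and gives $\nabla\d\in L^\infty(0,T;L^\infty)$ only when $3-\frac{3}{p}-\frac{3}{q}>1$. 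In the stated range $\frac{3}{2}\le p<3$ these embeddings fail (similarly $\nabla\u\in L^\infty(0,T;L^q)$ needs $p\ge 2$), so neither the self-map bound nor the contraction closes; and H\"older in time cannot then produce the required $T^\alpha$, because the only factor with spare time-integrability is exactly the one you have placed in $L^\infty$ in time.

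The paper's proof fixes this by reversing the distribution of exponents and by a space-time interpolation that your proposal lacks: the low-order factor ($\u$, $\nabla\d$) is kept in $L^\infty(0,T;L^q)$ via the trace spaces, while the top-order factor ($\nabla\u$, $\nabla\d$, $\triangle\d$) is placed in $L^p(0,T;L^\infty)$ by interpolating between the $L^\infty(0,T;\cdot)$ trace norm and the $L^p(0,T;W^{2,q})$ or $L^p(0,T;W^{3,q})$ maximal-regularity norm (Lemmas \ref{l1}--\ref{l3}); this is what yields the prefactors $T^{\frac{1}{2}-\frac{3}{2q}}$ and $T^{\frac{1}{3}-\frac{1}{q}}$, and it is exactly here --- not in a uniform-in-time $L^\infty$ embedding --- that the hypothesis $\frac{p}{2}(1-\frac{3}{q})\in(0,1)$ is used (it is the condition $\theta\in(0,1)$ for the interpolation, which also explains why an upper bound on $p$ appears). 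Once these lemmas are supplied, your scheme for part (1) and your quadratic-closure and continuation argument for part (2) go through essentially as in the paper; for the global estimates the $L^p(0,T;L^\infty)$ control of the top-order factors indeed follows directly from $W^{1,q}\hookrightarrow L^\infty$ with no $T$ factor, as you intend.
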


According to \cite{LLW},  a $\textit{Weak Solution}$ to \eqref{e2}
with the initial-boundary conditions \eqref{ic}-\eqref{bc} means a
triplet $(\tilde{\u}, \tilde{\bf d}, \Pi )$ satisfying system
\eqref{e2} in $\O\times(0,T)$ for $0<T\leq +\infty$ in the sense of
distributions, i.e, for any smooth function $\psi(t)$ with
$\psi(T)=0$ and $\phi(\x)\in \big(H_0^1(\O)\big)^3$ with
$\Dv\phi=0$, we have
\begin{equation*}
\begin{split}
&-\int_0^T (\tilde{\u}, \psi'\phi) \,dt+\int_0^T
(\tilde{\u}\cdot\nabla\tilde{\u}, \psi\phi)
\,dt+\mu\int_0^T(\nabla\tilde{\u}, \psi\nabla\phi)
\,dt\\
&=\psi(0)(\u_0, \phi)+\lambda\int_0^T(\nabla\tilde{\bf d}\odot
\nabla\tilde{\bf d}, \psi\nabla\phi) \,dt,
\end{split}
\end{equation*}
and
\begin{equation*}
\begin{split}
&-\int_0^T(\tilde{\d},\psi'\phi) \,dt+\int_0^T(\tilde{\u}\cdot\nabla
\tilde{\bf
d}, \psi\phi)\ dt+\gamma\int_0^T(\nabla\tilde{\bf d}, \psi\nabla\phi)\ dt\\
&=\psi(0)(\d_0, \phi)+\gamma\int_0^T|\nabla\tilde{\d}|^2(\tilde{\d},
\psi\phi)\ dt,
\end{split}
\end{equation*}
where $(\cdot,\cdot)$ denotes the inner product in $L^2(\O)^3$.
Moreover, $(\tilde{\u}, \tilde{\bf d})$ satisfies \eqref{bc} in the
sense of trace. 



Next, we will give a uniqueness result.
For $0<T<+\infty$, suppose $(\tilde{\u}, \tilde{\d}, \Pi)$ with
$$\tilde{\u}\in L^{2,\infty}(\O\times [0,T])\cap W_2^{1,0}(\O_T),\quad
\tilde{\d}\in L^\infty ([0,T],H^1(\O))\cap L^2([0,T],H^2(\O)),$$
and $$\nabla\Pi\in L^{\frac{4}{3}}(0,T; L^{\frac{6}{5}}(\O))$$ is a global weak
solution to \eqref{e2}-\eqref{bc}.  Since $\nabla\tilde{\d}\in
L^2(0,T;H^1(\O))$ and $|\tilde{\d}|=1$, then
$$\triangle\tilde{\d}\cdot\tilde{\d}+|\nabla\tilde{\d}|^2=0.$$
 Hence
$|\nabla\tilde{\d}|\in L^4(\O\times[0,T])$. We have the following
energy inequality (cf. \cite{LLW},  Section 5 for the two-dimensional
case):
\begin{equation}\label{26}
\begin{split}
&\frac{1}{2}\int_\O(|\tilde{\u}(t)|^2+|\nabla\tilde{\d}(t)|^2)
d\x+\int_0^t\!\!\int_\O
(|\nabla\tilde{\u}|^2+|\triangle\tilde{\d}+|\nabla\tilde{\d}|^2\tilde{\d}|^2)
d\x ds\\& \leq\frac{1}{2}\int_\O(|\u_0|^2+|\nabla\d_0|^2)d\x,
\end{split}
\end{equation}
for all $t\in (0,\infty)$.
We remark that the assumption on pressure function  holds since 
$\Pi$ can be determined as in the Navier-Stokes equations (see
\cite{Galdi}).


As for the standard Navier-Stokes equations,
 the question of uniqueness in the above class  remains open.
 However,  for the same initial-boundary conditions, a relation between the
weak solution and the strong solution can be formulated as:

\begin{Theorem}\label{T2}
Assume that $\u_0\in D_{A_q}^{1-\f{1}{p},p}$ and $\d_0\in
B_{q,p}^{3(1-\f{1}{p})}$. Then any weak
solution to \eqref{e2}-\eqref{bc} in the above class is unique and
indeed is equal to its unique strong solution.
\end{Theorem}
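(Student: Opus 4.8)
The plan is to run a relative-energy (weak--strong) argument in the spirit of Danchin \cite{D}. Denote by $(\u,\d,P)$ the strong solution furnished by Theorem \ref{T1} and by $(\tilde\u,\tilde\d,\Pi)$ the given weak solution, sharing the same data, and set $\bar\u=\tilde\u-\u$, $\bar\d=\tilde\d-\d$, $\bar P=\Pi-P$, so that $\bar\u(0)=0$ and $\bar\d(0)=0$. Both velocities are divergence-free and both directors satisfy the homogeneous Neumann condition, so the pressures disappear against divergence-free test fields and integrations by parts in the director term are boundary-free. I would combine three ingredients: the energy inequality \eqref{26} for the weak solution; the corresponding energy \emph{identity} for the strong solution, which holds with equality by virtue of its $M^{p,q}_T$ regularity; and the cross terms obtained by inserting the (smooth) strong solution $\u,\d$ into the weak formulation and, conversely, testing the strong equations against $\tilde\u$ and against $\D\bar\d$, $\bar\d$. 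Added together in the standard way these yield a differential inequality for the relative energy
$$E(t)=\tfrac12\|\bar\u(t)\|_{L^2}^2+\tfrac12\|\bar\d(t)\|_{H^1}^2,$$
carrying the dissipation $\|\na\bar\u\|_{L^2}^2+\|\na\bar\d\|_{L^2}^2+\|\D\bar\d\|_{L^2}^2$ on the left.

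The substance lies in estimating the nonlinear differences. The convective difference splits as $\tilde\u\cd\na\tilde\u-\u\cd\na\u=\bar\u\cd\na\tilde\u+\u\cd\na\bar\u$, the first piece handled by H\"older and Sobolev using $\u\in L^\infty$ (from $W^{2,q}\hookrightarrow L^\infty$, $q>3$) and the second being skew and absorbed into $\|\na\bar\u\|_{L^2}^2$; the director transport $\tilde\u\cd\na\tilde\d-\u\cd\na\d=\bar\u\cd\na\tilde\d+\u\cd\na\bar\d$ is treated likewise using $\|\na\d\|_{L^\infty}$. The genuinely hard terms are the coupling ones. After discarding a gradient into the pressure, the Ericksen-stress difference $\na\cd(\na\tilde\d\odot\na\tilde\d-\na\d\odot\na\d)$ reduces to $(\na\bar\d)^\top\D\d+(\na\tilde\d)^\top\D\bar\d$, while the relaxation difference is $|\na\tilde\d|^2\tilde\d-|\na\d|^2\d=\big((\na\tilde\d+\na\d):\na\bar\d\big)\tilde\d+|\na\d|^2\bar\d$. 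These are quadratic in derivatives of $\d$ and involve second derivatives; to control them I would use the $L^\infty$ bounds on $\d$, $\na\d$, $\na^2\d$ of the strong solution---available precisely because $\d\in L^p(0,T;W^{3,q})$ with $q>3$, so that $\na^2\d\in L^p(0,T;W^{1,q})\hookrightarrow L^p(0,T;L^\infty)$---together with the pointwise constraints $|\tilde\d|=|\d|=1$ (whence $\d\cd\bar\d=-\tfrac12|\bar\d|^2$), and, for the weak solution, the interpolation $\na\tilde\d\in L^4(\O\times[0,T])$ recorded before \eqref{26}. Each dangerous term is split so that the top-order factor ($\D\bar\d$ or $\na\bar\u$) is isolated and absorbed by the dissipation via Young's inequality, leaving a product of $E(t)$ with a coefficient built from $\|\na\d\|_{L^\infty}$, $\|\D\d\|_{L^\infty}$ and $\|\na\tilde\d\|_{L^4}$, all of which lie in $L^1(0,T)$.

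Collecting these bounds gives $\frac{d}{dt}E(t)+c\big(\|\na\bar\u\|_{L^2}^2+\|\D\bar\d\|_{L^2}^2\big)\le g(t)\,E(t)$ with $g\in L^1(0,T)$; since $E(0)=0$, Gronwall's inequality forces $E\equiv0$ on $[0,T]$, so $\bar\u\equiv0$ and $\bar\d\equiv0$, which identifies the weak solution with the strong one and thereby yields its uniqueness. The main obstacle I anticipate is exactly the term $(\na\d)^\top\D\d$ coming from the Ericksen stress, together with the relaxation term $|\na\d|^2\d$: these are the strongly nonlinear, second-order-in-$\d$ pieces flagged in the introduction, and closing their differences is possible only because the strong solution carries $W^{3,q}$ regularity giving $L^\infty$ control of $\na^2\d$; with merely $H^2$-type bounds on $\d$ the relative-energy inequality would not close. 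A secondary technical point is to justify that the smooth strong solution is an admissible test function in the weak formulation and that $\del_t\tilde\u$, $\del_t\tilde\d$ pair correctly with it, which follows from the stated regularity classes of the weak solution.
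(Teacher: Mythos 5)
Your overall strategy coincides with the paper's: combine the weak solution's energy inequality \eqref{26}, the strong solution's energy identity (Lemma \ref{l4}), and the cross terms from testing each system against the other solution, then close a Gr\"onwall inequality for the relative energy. The gap is in your claim that the coupling terms can be closed using only $\na\tilde\d\in L^4(\O\times[0,T])$ with a Gr\"onwall coefficient ``built from $\|\na\d\|_{L^\infty}$, $\|\D\d\|_{L^\infty}$ and $\|\na\tilde\d\|_{L^4}$, all in $L^1(0,T)$.'' Take the relaxation difference paired with the dissipation: after your decomposition one must bound
$\int_\O \big((\na\tilde\d+\na\d):\na\bar\d\big)\,\tilde\d\cd\D\bar\d\,d\x$. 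The $\na\d$ part is harmless, but the $\na\tilde\d$ part gives, by H\"older and Gagliardo--Nirenberg in $\R^3$,
$\|\na\tilde\d\|_{L^4}\|\na\bar\d\|_{L^4}\|\D\bar\d\|_{L^2}\le C\|\na\tilde\d\|_{L^4}\|\na\bar\d\|_{L^2}^{1/4}\|\D\bar\d\|_{L^2}^{7/4}$,
and absorbing $\|\D\bar\d\|_{L^2}^{7/4}$ into the dissipation by Young's inequality leaves the coefficient $\|\na\tilde\d\|_{L^4}^{8}$ multiplying $\|\na\bar\d\|_{L^2}^2$. Since the energy class only gives $\na\tilde\d\in L^4_{t,x}$, i.e.\ $\|\na\tilde\d\|_{L^4}^4\in L^1(0,T)$, this coefficient is not known to be integrable and Gr\"onwall does not apply. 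The same obstruction appears in your Ericksen-stress term $\int_\O(\na\tilde\d)^\top\D\bar\d\cd\bar\u\,d\x$ if estimated as written. The paper resolves the second issue structurally --- after adding the four identities, the stress and transport cross terms cancel so that only \emph{strong}-solution quantities ($\u$, $\D\d$) sit in the coefficient position (see \eqref{m}) --- and resolves the first by importing extra regularity for the \emph{weak} director: it invokes quasilinear parabolic estimates for \eqref{e22} to get $\tilde\d(\cdot,t)\in C^{1,\alpha}$ uniformly, hence $\|\na\tilde\d\|_{L^\infty}^2\in L^1(0,t)$, which is exactly what makes the coefficient $\|\na\d+\na\tilde\d\|_{L^\infty}^2$ in \eqref{34} integrable. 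Without some such additional input on $\na\tilde\d$ (beyond the energy-class $L^4$ bound), your relative-energy inequality does not close; this is the missing ingredient, not merely a technical point about admissibility of test functions.
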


Usually, we call this kind of uniqueness as $\textit{Weak-Strong
Uniqueness}$. For the similar results on the compressible
Navier-Stokes equations, we refer the  readers to \cite{EB, L}.

\bigskip

\section{Maximal Regularity}

In this section, we recall the maximal regularities for the parabolic operator and the Stokes operator, as well as some $L^\infty$ estimates.

For $T>0$, $1<p,q<\infty$, denote
$$\mathcal{W}(0,T):=\big(W^{1,p}(0,T; L^q(\O))\big)^3\cap \big(L^p(0,T;W^{3,q}(\O))\big)^3.$$
Throughout this paper, $C$ stands for a generic positive constant.

We first recall the maximal regularity for the parabolic operator (cf.
Theorem 4.10.7 and Remark 4.10.9 in \cite{HA}):
\begin{Theorem}\label{T3}
Given $1<p, q<\infty$, $\omega_0\in B_{q,p}^{3(1-\f{1}{p})}$ and
$f\in \big(L^p(0,T; L^q(\R^3))\big)^3$, the Cauchy problem
\begin{equation*}
\begin{cases}
\frac{\partial\omega}{\partial t}-\D\omega=f, \quad t>0, \\
\omega(0)=\omega_0
\end{cases}
\end{equation*}
has a unique solution $\omega\in \mathcal{W}(0,T)$, and
$$\|\omega\|_{\mathcal{W}(0,T)}\le C\left(\|f\|_{L^p(0,T; L^q(\R^3))}+\|\omega_0\|_{B_{q,p}^{3(1-\f{1}{p})}}\right),$$
where $C$ is independent of $\omega_0$, $f$ and $T$. Moreover,
there exists a positive constant $c_0$ independent of $f$ and $T$
such that
$$\|\omega\|_{\mathcal{W}(0,T)}\ge c_0\sup_{t\in(0,T)}\|\omega(t)\|_{B_{q,p}^{3(1-\f{1}{p})}}.$$
\end{Theorem}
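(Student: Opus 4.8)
The plan is to read Theorem \ref{T3} as the classical statement of maximal $L^p$--$L^q$ regularity for the heat operator on the whole space and to derive it from the semigroup/interpolation machinery rather than merely quoting \cite{HA}. First I would recast the Cauchy problem as the abstract evolution equation $\omega' + A\omega = f$, $\omega(0)=\omega_0$, with $A=-\D$ realized on the base space $X_0=L^q(\R^3)$, and write the solution through the variation-of-constants formula
$$\omega(t) = e^{t\D}\omega_0 + \int_0^t e^{(t-s)\D} f(s)\,ds.$$
I would then estimate the homogeneous term $e^{t\D}\omega_0$ and the inhomogeneous convolution separately.

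The analytic heart of the matter is that $-\D$ admits maximal $L^p$-regularity on $L^q(\R^3)$. On the whole space this is transparent through Fourier multipliers: writing $A=-\D$, the solution operator $f\mapsto A\int_0^t e^{-(t-s)A}f(s)\,ds$ and its time-derivative counterpart are, after causal extension and Fourier transform in $t$, the operator-valued multipliers $A(i\tau+A)^{-1}$ and $i\tau(i\tau+A)^{-1}$, $\tau\in\R$. Their $R$-boundedness (equivalently, the bounded $H^\infty$-calculus of $-\D$ on $L^q$) allows me to invoke the Weis operator-valued Mikhlin multiplier theorem and obtain
$$\Big\|\partial_t \!\!\int_0^t\! e^{(t-s)\D}f\,ds\Big\|_{L^p(0,\infty;L^q)} + \Big\|\D\!\!\int_0^t\! e^{(t-s)\D}f\,ds\Big\|_{L^p(0,\infty;L^q)} \le C\|f\|_{L^p(0,\infty;L^q)}.$$
Elliptic (Calderon--Zygmund) regularity then converts the control of $\D\,\cdot$ into full second-order spatial control. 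The constant is $T$-independent because I extend $f$ by zero to $(0,\infty)$, run the estimate on the half-line where the multiplier norm cannot see a finite horizon, and restrict to $(0,T)$.

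For the homogeneous part and the initial-data space I would use the trace characterization of real interpolation: $t\mapsto e^{tA}\omega_0$ belongs to $W^{1,p}(0,\infty;X_0)\cap L^p(0,\infty;D(A))$ with equivalent norms exactly when $\omega_0\in(X_0,D(A))_{1-\f1p,p}$, and this interpolation space is identified with a Besov space through the standard identity $(L^q,W^{k,q})_{\theta,p}=B^{k\theta}_{q,p}$. The same half-line embedding $W^{1,p}(0,\infty;X_0)\cap L^p(0,\infty;D(A))\hookrightarrow C([0,\infty);(X_0,D(A))_{1-\f1p,p})$ yields the reverse inequality $c_0\sup_t\|\omega(t)\|_{B_{q,p}^{3(1-\f1p)}}\le\|\omega\|_{\mathcal W(0,T)}$ with $c_0$ independent of $T$, and uniqueness follows at once from the a priori estimate applied to the difference of two solutions with $f=0$ and $\omega_0=0$.

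The main obstacle is matching the orders of the function spaces in the statement. A second-order operator $\partial_t-\D$ forced only in $L^p(0,T;L^q)$ produces two spatial derivatives, i.e. $\omega\in L^p(0,T;W^{2,q})$ with trace datum in $B_{q,p}^{2(1-\f1p)}$, whereas the theorem requires $W^{3,q}$ and $B_{q,p}^{3(1-\f1p)}$. To gain the extra order I would differentiate the equation once in space, which is legitimate on $\R^3$ since $\partial_{x_i}$ commutes with both $\partial_t-\D$ and the multiplier operators above, and apply the second-order estimate to $\na\omega$. This reads the forcing with one more spatial derivative and the datum in $B_{q,p}^{3(1-\f1p)}$, which is precisely the regularity in which the result is later used for the $\d$-equation. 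Carrying out this bootstrap while keeping every constant independent of $T$ is where the genuine work lies; the semigroup and interpolation inputs are otherwise standard.
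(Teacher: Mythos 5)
Your overall architecture (variation of constants, operator-valued multiplier bounds for the inhomogeneous term, trace characterization of real interpolation for the homogeneous term and for the lower bound $\|\omega\|_{\mathcal W(0,T)}\ge c_0\sup_t\|\omega(t)\|$) is the standard proof of maximal $L^p$--$L^q$ regularity, and it correctly yields the \emph{second-order} statement: $\omega\in W^{1,p}(0,T;L^q)\cap L^p(0,T;W^{2,q})$ with initial data in $B^{2(1-\frac1p)}_{q,p}$ and $T$-independent constants. Note that the paper offers no proof at this point: Theorem \ref{T3} is simply recalled from Theorem 4.10.7 and Remark 4.10.9 of \cite{HA}, so any derivation is already more than the paper supplies.

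The genuine gap is in your final bootstrap. To pass from $W^{2,q}$ to $W^{3,q}$ you differentiate the equation in space and apply the second-order estimate to $\nabla\omega$; but the forcing of the differentiated equation is $\nabla f$, and the hypothesis gives only $f\in L^p(0,T;L^q)$, not $\nabla f\in L^p(0,T;L^q)$. For generic such $f$ (take $\omega_0=0$ and $f$ with no spatial regularity beyond $L^q$) one gets $\Delta\omega=\partial_t\omega-f\in L^p(0,T;L^q)$ and nothing more, so $\omega$ does not lie in $L^p(0,T;W^{3,q})$. The missing derivative cannot be recovered by any argument from the stated hypotheses: the result in \cite{HA} is formulated on an interpolation--extrapolation scale in which the forcing is measured in a space of higher order than $L^q$, which is what makes the pairing $W^{3,q}$ / $B^{3(1-\frac1p)}_{q,p}$ consistent there. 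So the step you describe as ``where the genuine work lies'' is not a technicality to be carried out but exactly the point at which the statement, read literally with $f\in L^p(0,T;L^q(\R^3))$, does not follow; to make your proof close you would need either to strengthen the hypothesis on $f$ (e.g.\ $f\in L^p(0,T;W^{1,q})$) or to weaken the conclusion to the second-order spaces.
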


Now we recall the maximal regularity  for the Stokes equations (cf. Theorem 3.2 in \cite{D}):

\begin{Theorem}\label{T4}
Let $\O$ be a bounded domain with $C^{2+\varepsilon}$ boundary in
$\R^3$ and $1<p,q<\infty$. Assume that $\u_0\in
D_{A_q}^{1-\f{1}{p},p}$ and $f\in L^p(\R^{+}; L^q(\O))$. Then the system
\begin{equation*}
\begin{cases}
\frac{\partial\u}{\partial t}-\D\u+\nabla P=f,\quad \int_\O P\ d\x=0,\\
\Dv\u=0,\quad \u|_{\partial\O}=0,\\
\u|_{t=0}=\u_0
\end{cases}
\end{equation*}
has a unique solution $(\u, P)$ satisfying the following
inequality for all $T>0$:
\begin{equation}\label{41}
\begin{split}
\|\u(T)\|_{D_{A_q}^{1-\f{1}{p},p}}+&\left(\int_0^T\left\|\left(\nabla
P, \D\u,
\frac{\partial \u}{\partial t}\right)\right\|^p_{L^q}dt\right)^{\f{1}{p}}\\
&\le
C\left(\|\u_0\|_{D_{A_q}^{1-\f{1}{p},p}}+\left(\int_0^T\|f(t)\|_{L^q}^pdt\right)^{\f{1}{p}}\right)
\end{split}
\end{equation}
with $C=C(q, p,\sigma(\O))$ where $\sigma(\O)$ stands for the
open set $$\sigma(\O)=\left\{\frac{\x}{\dl(\O)}|
\x\in \O\right\}$$ with $\dl(\O)$ denoting the diameter of $\O$.
\end{Theorem}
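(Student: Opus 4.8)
The plan is to eliminate the pressure by the Helmholtz--Leray projection and reduce the Stokes system to an abstract Cauchy problem for the Stokes operator, then invoke maximal $L^p$-regularity for that operator. Writing $\mathbb{P}_q$ for the bounded projection of $L^q(\O)^3$ onto the solenoidal subspace $L^q_\sigma(\O)$ of divergence-free fields with vanishing normal trace, and $A_q=-\mathbb{P}_q\D$ for the Stokes operator with domain $D(A_q)=W^{2,q}(\O)^3\cap W^{1,q}_0(\O)^3\cap L^q_\sigma(\O)$, applying $\mathbb{P}_q$ to the momentum equation removes $\na P$ (since $\mathbb{P}_q\na P=0$) and leaves $\partial_t\u+A_q\u=\mathbb{P}_q f$ with $\u(0)=\u_0$. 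Here I would use that the $C^{2+\varepsilon}$ regularity of $\partial\O$ makes $\mathbb{P}_q$ bounded on $L^q$ and identifies $D(A_q)$ as above.

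The core step is maximal $L^p$-regularity for $A_q$. On a bounded smooth domain the Stokes operator generates a bounded analytic semigroup on $L^q_\sigma(\O)$ and, by the results of Giga and Giga--Sohr, possesses bounded imaginary powers of angle less than $\pi/2$; equivalently its negative admits a bounded $H^\infty$-calculus. I would then invoke the Dore--Venni theorem (or the $\mathcal{R}$-boundedness criterion of Weis) to conclude that for $1<p,q<\infty$ the map $f\mapsto(\partial_t\u,A_q\u)$ is bounded from $L^p(L^q_\sigma)$ into itself, which gives control of $\partial_t\u$ and of $A_q\u=-\mathbb{P}_q\D\u$. Because $\O$ is bounded, $0$ lies in the resolvent set of $A_q$ and its spectrum sits in a sector of the right half-plane bounded away from the origin, so the semigroup decays exponentially and maximal regularity holds on the whole half-line $(0,\infty)$ with a constant independent of the horizon; restricting an $f$ extended by zero then yields \eqref{41} with $C$ independent of $T$. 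The trace term $\|\u(T)\|_{D_{A_q}^{1-\f{1}{p},p}}$ and the hypothesis $\u_0\in D_{A_q}^{1-\f{1}{p},p}$ are handled by the trace characterization of the maximal regularity class, namely $D_{A_q}^{1-\f{1}{p},p}=(L^q_\sigma,D(A_q))_{1-\f{1}{p},p}$, which is exactly the space of admissible time traces.

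Once $\u$ is controlled I would recover the pressure: applying $I-\mathbb{P}_q$ to the momentum equation and using $\Dv\u=0$ (so that $(I-\mathbb{P}_q)\partial_t\u=0$) gives $\na P=(I-\mathbb{P}_q)(f+\D\u)$, whence $\|\na P\|_{L^q}\le C(\|f\|_{L^q}+\|\D\u\|_{L^q})$, together with $\|\D\u\|_{L^q}\le C\|A_q\u\|_{L^q}$ from the stationary Stokes estimate $\|\u\|_{W^{2,q}}\le C\|A_q\u\|_{L^q}$; the normalization $\int_\O P\,d\x=0$ then fixes $P$ uniquely. Uniqueness of $(\u,P)$ follows by applying the same reduction to the difference $\v$ of two solutions: it solves $\partial_t\v+A_q\v=0$ with $\v(0)=0$, hence $\v\equiv0$ by uniqueness for the analytic semigroup, and then $\na P=0$ with zero mean forces $P\equiv0$. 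Finally, the shape dependence $C=C(q,p,\sigma(\O))$ arises from rescaling $\x\mapsto\x/\dl(\O)$, which normalizes the diameter to one and turns $\O$ into $\sigma(\O)$ while preserving the scale-invariant combination in \eqref{41}. I expect the main obstacle to be precisely the maximal-regularity step for $A_q$ together with the uniform-in-$T$ constant: this rests on the bounded imaginary powers / $H^\infty$-calculus of the Stokes operator on $L^q_\sigma(\O)$, the genuinely hard harmonic-analytic input, whereas the projection algebra, the pressure recovery, and the scaling are comparatively routine.
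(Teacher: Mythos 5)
The paper does not prove this statement: Theorem \ref{T4} is recalled verbatim from Danchin (Theorem 3.2 of \cite{D}), with no argument given here. Your outline is essentially the standard proof underlying that citation --- Helmholtz--Leray projection to reduce to the abstract Cauchy problem for $A_q$, maximal $L^p$-regularity via bounded imaginary powers of the Stokes operator (Giga, Giga--Sohr) together with Dore--Venni, invertibility of $A_q$ on a bounded domain to get a $T$-independent constant, the real-interpolation characterization $D_{A_q}^{1-\f{1}{p},p}=(L^q_\sigma,D(A_q))_{1-\f{1}{p},p}$ for the trace term, and recovery of $\nabla P$ by $I-\mathbb{P}_q$ with the stationary estimate $\|\u\|_{W^{2,q}}\le C\|A_q\u\|_{L^q}$ --- so it is correct and consistent with the route the paper implicitly relies on.
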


\begin{Remark}
We notice that  \eqref{41} does not include the estimate for
$\|\u\|_{L^p(0,T; L^q)}$. Indeed,  thanks to $\u|_{\partial\O}=0$,
Poincare's inequality, and the fact $\int_\O\nabla\u \ d\x=0$, we
have
$$\|\u\|_{W^{2,q}}\le C\|\D\u\|_{L^q},$$ and then \eqref{41} can
be rewritten as
\begin{equation*}
\begin{split}
\|\u(T)\|_{D_{A_q}^{1-\f{1}{p},p}}+&\left(\int_0^T\left\|\left(\nabla
P, \u, \D\u,
\partial_t\u\right)\right\|^p_{L^q}dt\right)^{\f{1}{p}}\\
&\le
C\left(\|\u_0\|_{D_{A_q}^{1-\f{1}{p},p}}+\left(\int_0^T\|f(t)\|_{L^q}^pdt\right)^{\f{1}{p}}\right).
\end{split}
\end{equation*}
\end{Remark}

We have the $L^\infty$ estimate in the spatial variable as follows (cf. Lemma 4.1 in \cite{D}).

\begin{Lemma}\label{l1}
Let $1<p,q,r,s<\infty$ satisfy
$0<\f{p}{2}-\f{3p}{2q}<1,$
 then
$$\|\nabla f\|_{L^p(0,T; L^\infty)}\le
CT^{\f{1}{2}-\f{3}{2q}}\|f\|^{1-\theta}_{L^\infty(0,T;
D_{A_q}^{1-\f{1}{p},p})}\|f\|^\theta_{L^p(0,T; W^{2,q})},$$
 for some
constant $C$ depending only on $\O, p, q$ and
$$\f{1-\theta}{p}=\f{1}{2}-\f{3}{2q}.$$
\end{Lemma}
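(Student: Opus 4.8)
The plan is to decouple space and time: first I would prove, for almost every fixed $t$, the purely spatial multiplicative inequality
\[
\|\na f(t)\|_{L^\infty}\le C\,\|f(t)\|_{D_{A_q}^{1-\f{1}{p},p}}^{1-\theta}\,\|f(t)\|_{W^{2,q}}^{\theta},
\]
and then integrate it in time by H\"older's inequality. Here $\theta$ is the interpolation exponent pinned down by $\f{1-\theta}{p}=\f{1}{2}-\f{3}{2q}$, and the hypothesis $0<\f{p}{2}-\f{3p}{2q}<1$ is exactly the statement that $1-\theta=\f{p}{2}(1-\f{3}{q})\in(0,1)$; in particular $\theta\in(0,1)$ and $q>3$ (the parameters $r,s$ in the statement do not enter the argument).

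For the spatial inequality I would use real interpolation. Writing $D_{A_q}^{1-\f{1}{p},p}=(L^q_\sigma,D(A_q))_{1-\f{1}{p},p}$ with $D(A_q)$ comparable to $W^{2,q}\cap W^{1,q}_0$ on solenoidal fields, the reiteration theorem identifies, at the level of smoothness,
\[
\big(D_{A_q}^{1-\f{1}{p},p},\,W^{2,q}\big)_{\theta,1}=B^{2\beta}_{q,1},\qquad \beta=(1-\theta)\Big(1-\f{1}{p}\Big)+\theta .
\]
A short computation using $1-\theta=\f{p}{2}(1-\f{3}{q})$ gives $2\beta=1+\f{3}{q}$, so the interpolated space is the borderline Besov space $B^{1+3/q}_{q,1}$, which enjoys the sharp embedding $B^{1+3/q}_{q,1}\hookrightarrow W^{1,\infty}$ valid precisely at this endpoint. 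Combining this embedding with the standard multiplicative estimate for real interpolation with fine index one, namely $\|a\|_{(A_0,A_1)_{\theta,1}}\le C\|a\|_{A_0}^{1-\theta}\|a\|_{A_1}^{\theta}$ (obtained by splitting the $K$-functional integral at $t_0=\|a\|_{A_0}/\|a\|_{A_1}$), applied with $A_0=D_{A_q}^{1-\f{1}{p},p}$ and $A_1=W^{2,q}$, yields the displayed spatial bound for a.e.\ $t$.

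To finish, I would raise the spatial inequality to the power $p$, integrate over $(0,T)$, and pull the time-supremum of the $D_{A_q}$-norm out of the integral to obtain
\[
\|\na f\|_{L^p(0,T;L^\infty)}^p\le C\,\|f\|_{L^\infty(0,T;D_{A_q}^{1-\f{1}{p},p})}^{(1-\theta)p}\int_0^T\|f(t)\|_{W^{2,q}}^{\theta p}\,dt .
\]
Applying H\"older in time to the remaining integral with the conjugate exponents $1/\theta$ and $1/(1-\theta)$ bounds it by $\|f\|_{L^p(0,T;W^{2,q})}^{\theta p}\,T^{1-\theta}$; taking the $p$-th root and recalling $\f{1-\theta}{p}=\f{1}{2}-\f{3}{2q}$ reproduces the asserted estimate, including the power $T^{\f{1}{2}-\f{3}{2q}}$. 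The only delicate step is the spatial inequality: the exponents are tuned so that the target space lies exactly at the critical smoothness $1+\f{3}{q}$ needed to control $\na f$ in $L^\infty$, a borderline that fails for ordinary Sobolev spaces and is recovered only because interpolation with fine index one lands in the Besov space $B^{1+3/q}_{q,1}$.
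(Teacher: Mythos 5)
Your argument is correct and is essentially the route the paper takes: Lemma \ref{l1} itself is quoted from Danchin, but the paper's proof of the analogous Lemma \ref{l2} rests on exactly the same two ingredients --- a real interpolation identity with fine index one at the critical regularity (the paper writes $(B^{1-\f{2}{p}-\f{3}{q}}_{\infty,\infty},\, B^{1-\f{3}{q}}_{\infty,\infty})_{\theta,1}=B^0_{\infty,1}\hookrightarrow L^\infty$ applied to $\nabla f$ after embedding the two data spaces into the $B^s_{\infty,\infty}$ scale, while you interpolate the original couple first and then invoke $B^{1+3/q}_{q,1}\hookrightarrow W^{1,\infty}$, which is the same borderline embedding) --- followed by the identical H\"older-in-time step that produces the factor $T^{\f{1}{2}-\f{3}{2q}}$. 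The only difference is the order in which you embed and interpolate, which is immaterial since for your version one only needs the one-sided inclusion $(D_{A_q}^{1-\f{1}{p},p}, W^{2,q})_{\theta,1}\hookrightarrow B^{1+3/q}_{q,1}$, available by functoriality of real interpolation.
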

Similarly, we have,
\begin{Lemma}\label{l2}
Let $1<p,q<\infty$ satisfy $0<\f{p}{2}-\f{3p}{2q}<1$, then
$$\|\nabla f\|_{L^p(0,T; L^\infty)}\le
CT^{\f{1}{2}-\f{3}{2q}}\|f\|^{1-\theta}_{L^\infty(0,T;
B_{q,p}^{2(1-\f{1}{p})})} \|f\|^\theta_{L^p (0,T; W^{2,q})},$$ for
some constant $C$ depending only on $\O, p, q$ and
$$\f{1-\theta}{p}=\f{1}{2}-\f{3}{2q}.$$
\end{Lemma}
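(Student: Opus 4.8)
The plan is to follow the same two-step scheme used for Lemma \ref{l1} (Lemma 4.1 of \cite{D}), the only change being that the fractional Stokes domain $D_{A_q}^{1-\f1p,p}$ is replaced by the Besov space $B_{q,p}^{2(1-\f1p)}$. The point is that both spaces carry the \emph{same} differentiability index $2-\f2p$, so the interpolation bookkeeping is identical: one simply re-runs the argument with the Besov space as the low-regularity endpoint. First I would establish a purely spatial interpolation inequality, valid for a.e.\ fixed $t$, of Gagliardo--Nirenberg type,
$$\|\nabla f(t)\|_{L^\infty}\le C\,\|f(t)\|_{B_{q,p}^{2(1-\f1p)}}^{1-\t}\,\|f(t)\|_{W^{2,q}}^{\t},$$
and then integrate in time with H\"older to produce the factor $T^{\f12-\f3{2q}}$.

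For the spatial inequality I would argue by abstract real interpolation (cf.\ \cite{BL}). Writing $B_{q,p}^{2(1-\f1p)}=(L^q,W^{2,q})_{1-\f1p,p}$ and invoking the reiteration theorem, one obtains for the prescribed exponent $\t$
$$\big(B_{q,p}^{2(1-\f1p)},\,W^{2,q}\big)_{\t,1}=(L^q,W^{2,q})_{\eta,1}=B_{q,1}^{2\eta},\qquad \eta=1-\f{1-\t}{p}.$$
The relation $\f{1-\t}{p}=\f12-\f3{2q}$ forces $2\eta=1+\f3q$, so this intermediate space is $B_{q,1}^{1+\f3q}$, and the sharp Besov embedding $B_{q,1}^{1+\f3q}\hookrightarrow W^{1,\infty}$ (valid since $q>3$, which is precisely the content of the left inequality $0<\f p2-\f{3p}{2q}$) controls $\|\nabla f\|_{L^\infty}$. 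Combining this embedding with the fundamental interpolation estimate $\|g\|_{(X_0,X_1)_{\t,1}}\le C\|g\|_{X_0}^{1-\t}\|g\|_{X_1}^{\t}$ yields the displayed spatial bound, with $C$ depending only on $\O,p,q$; note that the constraint $\f p2-\f{3p}{2q}<1$ is exactly what guarantees $\t\in(0,1)$.

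Having fixed the spatial estimate, I would take the $L^p(0,T)$ norm in time, pull the factor $\|f\|_{L^\infty(0,T;B_{q,p}^{2(1-\f1p)})}^{1-\t}$ out of the supremum, and apply H\"older's inequality in $t$ to $\int_0^T\|f(t)\|_{W^{2,q}}^{\t p}\,dt$ with conjugate exponents $\f1\t$ and $\f1{1-\t}$. This gives
$$\Big(\int_0^T\|f(t)\|_{W^{2,q}}^{\t p}\,dt\Big)^{\f1p}\le T^{\f{1-\t}{p}}\,\|f\|_{L^p(0,T;W^{2,q})}^{\t},$$
and since $\f{1-\t}{p}=\f12-\f3{2q}$ the power of $T$ is exactly $\f12-\f3{2q}$, completing the estimate. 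The constant stays $T$-independent because all $T$-dependence is isolated in this explicit H\"older factor, while the spatial interpolation is scale-fixed.

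The one genuine point requiring care is the spatial interpolation step: correctly identifying the intermediate space via reiteration and verifying the critical Besov embedding $B_{q,1}^{1+\f3q}\hookrightarrow W^{1,\infty}$, which needs the second index to be $1$ at the borderline smoothness $\f3q$ for the gradient (a larger index would fail the $L^\infty$ embedding). Everything else — the reiteration computation, the choice of $\t$, and the time-H\"older step — is routine once the regularity indices are matched, and the hypothesis $0<\f p2-\f{3p}{2q}<1$ is seen to encode simultaneously that $q>3$ and that $\t\in(0,1)$.
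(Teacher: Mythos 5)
Your proof is correct and follows essentially the same route as the paper: a pointwise-in-time real-interpolation inequality $\|\nabla f(t)\|_{L^\infty}\le C\|f(t)\|^{1-\theta}_{B_{q,p}^{2(1-\frac{1}{p})}}\|f(t)\|^{\theta}_{W^{2,q}}$ derived from the Bergh--L\"ofstr\"om interpolation and embedding theorems, followed by the identical H\"older-in-time step that isolates the factor $T^{\frac{1}{2}-\frac{3}{2q}}$. The only (immaterial) difference is the ordering of the spatial step: the paper first embeds both endpoint spaces into the $B^{s}_{\infty,\infty}$ scale and then interpolates $\nabla f$ to land in $B^{0}_{\infty,1}\hookrightarrow L^\infty$, whereas you interpolate the couple $\big(B_{q,p}^{2(1-\frac{1}{p})},W^{2,q}\big)$ by reiteration and then invoke the critical embedding $B^{1+\frac{3}{q}}_{q,1}\hookrightarrow W^{1,\infty}$.
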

\begin{proof}
First, we notice that (cf. Theorem
6.4.5 in \cite{BL})
$$(B^{1-\f{2}{p}-\f{3}{q}}_{\infty,\infty}, \
B^{1-\f{3}{q}}_{\infty,\infty})_{\theta,1}=B^0_{\infty,1}\ \
\textrm{with} \ \  \f{1-\theta}{p}=\f{1}{2}-\f{3}{2q}.$$  Also the imbedding
$B^0_{\infty,1}\hookrightarrow L^\infty$ holds due to Theorem
6.2.4 in \cite{BL}. Hence,
\begin{equation}\label{5555}
|\nabla f|_\infty\le C\|\nabla f\|_{B^0_{\infty,1}}\le C\|\nabla
f\|^{\theta}_{B^{1-\f{3}{q}}_{\infty,\infty}}\|\nabla
f\|^{1-\theta}_{B^{1-\f{2}{p}-\f{3}{q}}_{\infty,\infty}}.
\end{equation}
We remark that (cf.
Theorem 6.2.4 and Theorem 6.5.1 in \cite{BL}) $$ B_{q,p}^{2(1-\f{1}{p})}\hookrightarrow
B_{\infty,\infty}^{2-\f{2}{p}-\f{3}{q}}, \quad W^{1,q}\hookrightarrow
B_{q,\infty}^1\hookrightarrow B_{\infty,\infty}^{1-\f{3}{q}}.$$
Thus, according to \eqref{5555} and by applying H\"{o}lder's inequality, we
deduce that
\begin{equation*}
\begin{split}
\|\nabla f\|_{L^p(0,T;  L^\infty)}&\le C\left(\int_0^T\|\nabla
f\|^{p\theta}_{B^{1-\f{3}{q}}_{\infty,\infty}}\|\nabla
f\|^{p(1-\theta)}_{B^{1-\f{2}{p}-\f{3}{q}}_{\infty,\infty}}dt\right)^{\f{1}{p}}\\
&\le
C\left(\int_0^T\|f\|^{p\theta}_{W^{2,q}}\|f\|^{p(1-\theta)}_{B_{q,p}^{2-\frac{2}{p}-\frac{3}{q}}}dt\right)^{\f{1}{p}}\\
&\le
C\left(\int_0^T\|f\|^{p\theta}_{W^{2,q}}\|f\|^{p(1-\theta)}_{B_{q,p}^{2(1-\f{1}{p})}}dt\right)^{\f{1}{p}}\\
&\le CT^{\f{1}{2}-\f{3}{2q}}\|f\|^{1-\theta}_{L^\infty(0,T;
B_{q,p}^{2(1-\f{1}{p})})}\|f\|^\theta_{L^p(0,T; W^{2,q})}.
\end{split}
\end{equation*}
The proof is complete.
\end{proof}
\begin{Remark}
In this paper, we will use the following weaker result:
$$\|\nabla f\|_{L^p(0,T; L^\infty)}\le
CT^{\f{1}{2}-\f{3}{2q}}\|f\|^{1-\theta}_{L^\infty(0,T;
B_{q,p}^{3(1-\f{1}{p})})} \|f\|^\theta_{L^p (0,T; W^{3,q})}.$$
\end{Remark}
\begin{Lemma}\label{l3}
Let $1<p,q<\infty$ satisfy $0<\f{p}{3}-\f{p}{q}<1$, then
$$\|\triangle f\|_{L^p(0,T; L^\infty)}\le
CT^{\f{1}{3}-\f{1}{q}}\|f\|^{1-\theta}_{L^\infty(0,T;
B_{q,p}^{3(1-\f{1}{p})})} \|f\|^\theta_{L^p(0,T; W^{3,q})},$$ for
some constant $C$ depending only on $\O, p, q$ and
$$\f{1-\theta}{p}=\f{1}{3}-\f{1}{q}.$$
\end{Lemma}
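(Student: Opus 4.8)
The plan is to follow verbatim the strategy used in the proof of Lemma \ref{l2}, simply replacing the first-order operator $\nabla$ by the second-order operator $\triangle$ and shifting all regularity indices up by one derivative. First I would identify the two endpoint spaces for $\triangle f$. Since $f\in W^{3,q}$ forces $\triangle f\in W^{1,q}$, and since $f\in B_{q,p}^{3(1-\f{1}{p})}=B_{q,p}^{3-\f{3}{p}}$ forces $\triangle f\in B_{q,p}^{1-\f{3}{p}}$, the natural endpoints are $B_{\infty,\infty}^{1-\f{3}{q}}$ and $B_{\infty,\infty}^{1-\f{3}{p}-\f{3}{q}}$, obtained from the embeddings $W^{1,q}\hookrightarrow B_{q,\infty}^1\hookrightarrow B_{\infty,\infty}^{1-\f{3}{q}}$ and $B_{q,p}^{3(1-\f{1}{p})}\hookrightarrow B_{\infty,\infty}^{3-\f{3}{p}-\f{3}{q}}$ (Theorems 6.2.4 and 6.5.1 in \cite{BL}), the latter combined with the fact that $\triangle$ lowers the order of a Besov space by two.

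The core step is the interpolation identity
$$(B_{\infty,\infty}^{1-\f{3}{p}-\f{3}{q}},\ B_{\infty,\infty}^{1-\f{3}{q}})_{\theta,1}=B_{\infty,1}^0,\qquad \f{1-\theta}{p}=\f{1}{3}-\f{1}{q},$$
which is the exact analogue of the identity used in \eqref{5555} and again follows from Theorem 6.4.5 in \cite{BL}; one checks that $0=(1-\theta)(1-\f{3}{p}-\f{3}{q})+\theta(1-\f{3}{q})$ is equivalent to the stated relation for $\theta$. Composing with the embedding $B_{\infty,1}^0\hookrightarrow L^\infty$ yields, pointwise in time, the interpolation inequality $|\triangle f|_\infty\le C\|\triangle f\|_{B_{\infty,\infty}^{1-\f{3}{q}}}^\theta\|\triangle f\|_{B_{\infty,\infty}^{1-\f{3}{p}-\f{3}{q}}}^{1-\theta}$, and then the two embeddings above bound the factors by $\|f\|_{W^{3,q}}^\theta$ and $\|f\|_{B_{q,p}^{3(1-\f{1}{p})}}^{1-\theta}$ respectively.

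Finally I would raise to the $p$-th power, integrate in $t$, pull the low-regularity factor out in its $L^\infty(0,T)$ norm, and apply H\"older's inequality in time to $\int_0^T\|f\|_{W^{3,q}}^{p\theta}\,dt$ with conjugate exponents $\f{1}{\theta}$ and $\f{1}{1-\theta}$; this produces the factor $T^{(1-\theta)/p}=T^{\f{1}{3}-\f{1}{q}}$ together with $\|f\|_{L^p(0,T;W^{3,q})}^\theta$, giving exactly the claimed bound. The only point requiring care --- and the sole place where the hypothesis enters --- is verifying that the index condition $0<\f{p}{3}-\f{p}{q}<1$ makes the real interpolation legitimate: it is equivalent to $0<\theta<1$, and it simultaneously forces $q>3$ (so that the right endpoint $1-\f{3}{q}>0$) and $\f{3}{p}+\f{3}{q}>1$ (so that the left endpoint $1-\f{3}{p}-\f{3}{q}<0$), placing the target level $s=0$ strictly between the two endpoints as the interpolation theorem demands. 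Everything else is routine and parallels Lemma \ref{l2}.
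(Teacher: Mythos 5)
Your proof is correct and follows essentially the same route as the paper: the paper interpolates $\nabla f$ at level $s=1$ via $(B^{2-\f{3}{p}-\f{3}{q}}_{\infty,\infty},B^{2-\f{3}{q}}_{\infty,\infty})_{\theta,1}=B^1_{\infty,1}\hookrightarrow W^{1,\infty}$, while you shift everything down by one derivative and interpolate $\triangle f$ directly at $s=0$, but the endpoints, the value of $\theta$, the embeddings from Bergh--L\"ofstr\"om, and the final H\"older step in time are identical. Your verification that the hypothesis $0<\f{p}{3}-\f{p}{q}<1$ is exactly $0<\theta<1$ is also the right (and only) point where the index condition is used.
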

\begin{proof}
First, we notice that (cf. Theorem 6.4.5 in
\cite{BL})
$$(B^{2-\f{3}{p}-\f{3}{q}}_{\infty,\infty}, B^{2-\f{3}{q}}_{\infty,\infty})_{\theta,1}=B^1_{\infty,1}\ \textrm{with}
\f{1-\theta}{p}=\f{1}{3}-\f{1}{q}.$$ Also the imbedding $B^1_{\infty,1}\hookrightarrow W^{1,
\infty}$ is true due to Theorem 6.2.4 in \cite{BL}. Hence,
\begin{equation}\label{5}
\|\triangle f\|_{L^\infty}\le C\|\nabla f\|_{W^{1, \infty}}\le
C\|\nabla f\|_{B^1_{\infty,1}}\le C\|\nabla
f\|^{\theta}_{B^{2-\f{3}{q}}_{\infty,\infty}}\|\nabla
f\|^{1-\theta}_{B^{2-\f{3}{p}-\f{3}{q}}_{\infty,\infty}}.
\end{equation}
We remark that (cf.
Theorem 6.2.4 and Theorem 6.5.1 in \cite{BL}) $$B_{q,p}^{3(1-\f{1}{p})}\hookrightarrow
B_{\infty,\infty}^{3-\f{3}{p}-\f{3}{q}}, \quad W^{2,q}\hookrightarrow
B_{q,\infty}^2\hookrightarrow B_{\infty,\infty}^{2-\f{3}{q}}.$$ Thus, according to
\eqref{5} and by applying H\"{o}lder's inequality, we
deduce that
\begin{equation*}
\begin{split}
\|\triangle f\|_{L^p(0,T; L^\infty)}&\le C\left(\int_0^T\|\nabla
f\|^{p\theta}_{B^{2-\f{3}{q}}_{\infty,\infty}}\|\nabla
f\|^{p(1-\theta)}_{B^{2-\f{3}{p}-\f{3}{q}}_{\infty,\infty}}dt\right)^{\f{1}{p}}\\
&\le
C\left(\int_0^T\|f\|^{p\theta}_{W^{3,q}}\|f\|^{p(1-\theta)}_{B_{q,p}^{3(1-\f{1}{p})}}dt\right)^{\f{1}{p}}\\
&\le CT^{\f{1}{3}-\f{1}{q}}\|f\|^{1-\theta}_{L^\infty(0,T;
B_{q,p}^{3(1-\f{1}{p})})}\|f\|^\theta_{L^p(0,T; W^{3,q})}.
\end{split}
\end{equation*}
The proof is complete.
\end{proof}

\bigskip

\section{Local Existence}

In this section, we prove the local existence and uniqueness of strong solution in Theorem \ref{T1}. The proof will be divided into several steps, including constructing the approximate solutions by iteration, obtaining the uniform estimate, showing the convergence, consistency and uniqueness.

\subsection{Construction of approximate solutions}
We initialize the construction of approximate solutions by setting
$$\d^0:= \d_0,\quad \u^0:=\u_0.$$
 For given $(\u^n, \d^n, P^n)$,  the
Stokes equations \eqref{e41} and the parabolic equation \eqref{e42}
enable us to define $(\u^{n+1}, \d^{n+1}, P^{n+1})$ as the global
solution of
\begin{equation}\label{e5}
\begin{cases}
\f{\partial\u^{n+1}}{\partial t}-\D\u^{n+1}+\nabla
P^{n+1}=-\u^n\cdot\nabla\u^n-\nabla\cdot\big((\nabla\d^n)^\top\nabla\d^n\big),\\
\f{\partial\d^{n+1}}{\partial
t}-\D\d^{n+1}=-\u^{n}\cdot\nabla \d^{n}+|\nabla\d^n|^2\d^n,\\
\Dv\u^{n+1}=0, \quad \int_{\O}P^{n+1}d\x=0
\end{cases}
\end{equation}
with the initial-boundary conditions:
$$(\u^{n+1},\d^{n+1})|_{t=0}=(\u_0,\d_0), \quad (\u^{n+1},\partial_{\bf \nu}\d^{n+1})|_{\partial\O}=(0,0).$$

According to Theorems \ref{T3}-\ref{T4}, an argument by
induction yields a sequence
$$\{(\u^n, \d^n, \\P^n)\}_{n\in {\mathbb N}} \subset M^{p,q}_T$$
 for all positive $T$.

\subsection{ Uniform estimate for some small fixed time $T$}   We aim
at finding a positive time $T$ independent of $n$ for which
$\{(\u^n, \d^n, P^n)\}_{n\in {\mathbb N}} $ is uniformly bounded in
the space $M^{p,q}_T$. Indeed, applying Theorem \ref{T4} to
\begin{equation*}
\begin{cases}
\f{\partial\u^{n+1}}{\partial t}-\D\u^{n+1}+\nabla
P^{n+1}=-\u^n\cdot\nabla\u^n-\nabla\cdot\big((\nabla\d^n)^\top\nabla\d^n\big),\\
\Dv\u^{n+1}=0, \quad \int_{\O}P^{n+1}d\x=0,\\
\u^{n+1}|_{t=0}=\u_0, \quad \u^{n+1}|_{\partial\O}=0
\end{cases}
\end{equation*}
 and Theorem \ref{T3} to
\begin{equation*}
\begin{cases}
\f{\partial\d^{n+1}}{\partial
t}-\D\d^{n+1}=-\u^{n}\cdot\nabla \d^{n}+|\nabla\d^n|^2\d^n,\\
\d^{n+1}|_{t=0}=\d_0, \quad \partial_{\bf \nu}\d^{n+1}|_{\partial\O}=0,
\end{cases}
\end{equation*}
 we obtain
\begin{equation}\label{3}
\begin{split}
&\|\u^{n+1}(T)\|_{D_{A_q}^{1-\f{1}{p},p}}+\left(\int_0^T\left\|\left(\nabla
P^{n+1}, \u^{n+1}, \D\u^{n+1},
\frac{\partial\u^{n+1}}{\partial t}\right)\right\|^p_{L^q}dt\right)^{\f{1}{p}}\\
&\le
C\left(\|\u_0\|_{D_{A_q}^{1-\f{1}{p},p}}+\left(\int_0^T\|\u^n\cdot\nabla\u^n+\nabla\cdot\big((\nabla\d^n)^\top\nabla\d^n\big)\|_{L^q}^pdt\right)^{\f{1}{p}}\right),
\end{split}
\end{equation}
and
\begin{equation}\label{4}
\begin{split}
&\|\d^{n+1}(T)\|_{B_{q,p}^{3(1-\f{1}{p})}}+\|\d^{n+1}\|_{\mathcal{W}(0,T)}\\
&\le
C\left(\|\d_0\|_{B_{q,p}^{3(1-\f{1}{p})}}+\|-\u^{n}\cdot\nabla \d^{n}+|\nabla\d^n|^2\d^n\|_{L^p(0,T;
L^q)}\right).
\end{split}
\end{equation}
 Now define
\begin{equation*}
\begin{split}
U^n(t):=&\|\u^n\|_{L^\infty(0,t; D_{A_q}^{1-\f{1}{p},p})}+\|\u^n\|_{L^p(0,t;
W^{2,q})}+\|\frac{\partial\u^n}{\partial t}\|_{L^p(0,t; L^q)}\\
&+\|\d^n\|_{L^\infty(0,t;
B_{q,p}^{3(1-\f{1}{p})})}+\|\d^n\|_{\mathcal{W}(0,t)},
\end{split}
\end{equation*}
 and
$$U^0:=\|\u_0\|_{D_{A_q}^{1-\f{1}{p},p}}+\|\d_0\|_{B_{q,p}^{3(1-\f{1}{p})}}.$$
Hence, from \eqref{3} and \eqref{4}, we get, using Lemmas
\ref{l1}-\ref{l3},
\begin{equation}\label{6}
\begin{split}
U^{n+1}(t)&\le C\Big(U^0+\|\u^n\|_{L^\infty(0,t;
L^q)}\|\nabla\u^n\|_{L^p(0,t; L^\infty)}\\
&\qquad\qquad+2\|\nabla\d^n\|_{L^\infty(0,t;
L^q)}\|\triangle\d^n\|_{L^p(0,t; L^\infty)}\\
&\qquad\qquad+\|\u^n\|_{L^\infty(0,t; L^q)}\|\nabla\d^n\|_{L^p(0,t;
L^\infty)}\\
&\qquad\qquad+\|\d^n\|_{L^\infty(0,t;
L^\infty)}\|\nabla\d^n\|_{L^\infty(0,t;
L^q)}\|\nabla\d^n\|_{L^p(0,t;
L^\infty)}\Big)\\
&\le
C\left(U^0+2t^{\f{1}{2}-\f{3}{2q}}\left(U^n(t)\right)^2+2t^{\f{1}{3}-\f{1}{q}}\left(U^n(t)\right)^2+t^{\f{1}{2}-\f{3}{2q}}\left(U^n(t)\right)^3\right).
\end{split}
\end{equation}
Moreover, if we assume that $U^n(t)\le 4CU^0$ on $[0,T]$ with
\begin{equation}\label{TTT1}
\begin{split}
&0<T\leq \left(\frac{3}{64C^2U^0+64C^3(U^0)^2}\right)^{\frac{3q}{q-3}}\leq1, \ \textrm{or}\\
& 1<T\leq\left(\frac{3}{64C^2U^0+64C^3(U^0)^2}\right)^{\frac{2q}{q-3}},
\end{split}
\end{equation}
then a direct computation yields
\begin{equation*}
U^{n+1}(t)\le 4CU^0 \quad\textrm{on}\ [0,T].
\end{equation*}

From \eqref{3}-\eqref{6}, we conclude
that the sequence $\{(\u^n, \F^n, P^n)\}_{n=1}^\infty$ is
uniformly bounded in $M^{p,q}_T$. More precisely, we have
\begin{Lemma}
For all $t\in [0,T]$ with $T$ satisfying \eqref{TTT1},
\begin{equation*}
U^n(t)\le 4CU^0.
\end{equation*}
\end{Lemma}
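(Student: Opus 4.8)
The plan is to prove the uniform bound $U^n(t)\le 4CU^0$ on $[0,T]$ by induction on $n$, where $T$ is chosen to satisfy \eqref{TTT1}. The base case $n=0$ is immediate since $U^0=\|\u_0\|_{D_{A_q}^{1-\f{1}{p},p}}+\|\d_0\|_{B_{q,p}^{3(1-\f{1}{p})}}\le 4CU^0$ (recalling $C$ is the generic constant, which we may take $\ge \f{1}{4}$; if not, the inequality $U^0\le 4CU^0$ simply forces a relabeling). For the inductive step, I would assume $U^n(t)\le 4CU^0$ for all $t\in[0,T]$ and feed this into the recursive estimate \eqref{6}, which has already been derived from the maximal-regularity inequalities \eqref{3}--\eqref{4} together with Lemmas \ref{l1}--\ref{l3}.

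The heart of the argument is the algebraic verification that the right-hand side of \eqref{6} stays below $4CU^0$. Substituting $U^n(t)\le 4CU^0$ into
$$
U^{n+1}(t)\le C\left(U^0+2t^{\f{1}{2}-\f{3}{2q}}(U^n(t))^2+2t^{\f{1}{3}-\f{1}{q}}(U^n(t))^2+t^{\f{1}{2}-\f{3}{2q}}(U^n(t))^3\right),
$$
and using $0\le t\le T\le 1$ together with $q>3$ (so that both exponents $\f{1}{2}-\f{3}{2q}$ and $\f{1}{3}-\f{1}{q}$ are positive), I would bound each time power by $T$ raised to the relevant exponent. The goal is to show the three nonlinear terms together contribute at most $3CU^0$, so that $U^{n+1}(t)\le C(U^0)+3CU^0=4CU^0$. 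This reduces to choosing $T$ small enough that
$$
2CT^{\f{1}{2}-\f{3}{2q}}(4CU^0)^2+2CT^{\f{1}{3}-\f{1}{q}}(4CU^0)^2+CT^{\f{1}{2}-\f{3}{2q}}(4CU^0)^3\le 3CU^0,
$$
which after dividing by $CU^0$ and grouping the $t^{\f{1}{2}-\f{3}{2q}}$ and $t^{\f{1}{3}-\f{1}{q}}$ terms separately yields exactly the two threshold conditions in \eqref{TTT1}: the exponent $\f{3q}{q-3}$ comes from inverting $\f{1}{2}-\f{3}{2q}=\f{q-3}{2q}$ when that is the dominant (smaller) power for $T\le 1$, and $\f{2q}{q-3}$ from inverting $\f{1}{3}-\f{1}{q}=\f{q-3}{3q}$ in the complementary regime $T>1$. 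The precise split into the two cases in \eqref{TTT1} is dictated by which of the two time exponents is smaller, hence controlling, on the respective ranges of $T$.

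The main obstacle, and the only place requiring genuine care, is keeping track of which time exponent dominates so that the single worst-case bound $T^{\text{(smaller exponent)}}$ is used consistently; since $\f{1}{3}-\f{1}{q}<\f{1}{2}-\f{3}{2q}$ exactly when $\f{q-3}{3q}<\f{q-3}{2q}$, which holds for all $q>3$, the $t^{\f{1}{3}-\f{1}{q}}$ term decays more slowly and is the binding constraint. One must also absorb the numerical constants $2,2,1$ and the factors $(4C)^2$, $(4C)^3$ into the explicit fractions $\f{3}{64C^2U^0+64C^3(U^0)^2}$ appearing in \eqref{TTT1}; this is a routine but slightly delicate bookkeeping step. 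Once $T$ is fixed to satisfy \eqref{TTT1}, the inductive step closes and the lemma follows, giving a uniform bound independent of $n$ and thereby the uniform boundedness of $\{(\u^n,\d^n,P^n)\}$ in $M^{p,q}_T$ that is needed for the subsequent compactness and convergence arguments.
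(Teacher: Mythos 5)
Your proposal is correct and follows essentially the same route as the paper: the paper's ``direct computation'' is exactly your induction on $n$, feeding the hypothesis $U^n(t)\le 4CU^0$ into \eqref{6} and choosing $T$ via \eqref{TTT1} so that the three nonlinear terms contribute at most $3CU^0$. One small slip: in your second paragraph you attach the exponent $\f{3q}{q-3}$ to inverting $\f{1}{2}-\f{3}{2q}=\f{q-3}{2q}$ (whose inverse is actually $\f{2q}{q-3}$), whereas $\f{3q}{q-3}$ inverts $\f{1}{3}-\f{1}{q}=\f{q-3}{3q}$; your final paragraph identifies the binding exponent correctly, so the argument itself is unaffected.
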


\subsection{Convergence of the approximation sequence}
\begin{Lemma}
 $\{(\u^n, \F^n, P^n)\}_{n=1}^\infty$ is a
Cauchy sequence in  $M_{T_0}^{p,q}$ and thus converges.
\end{Lemma}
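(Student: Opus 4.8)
The plan is to prove a geometric contraction for the consecutive differences. Set $\delta\u^{n+1}:=\u^{n+1}-\u^n$, $\delta\d^{n+1}:=\d^{n+1}-\d^n$ and $\delta P^{n+1}:=P^{n+1}-P^n$. Subtracting the system \eqref{e5} at level $n$ from the one at level $n+1$, the triple $(\delta\u^{n+1},\delta\d^{n+1},\delta P^{n+1})$ solves a linear Stokes/parabolic system with homogeneous boundary data and, crucially, \emph{zero} initial data, since both $\u^{n+1},\u^n$ (resp. $\d^{n+1},\d^n$) start from $\u_0$ (resp. $\d_0$). The forcing in the velocity equation is
\[
-(\u^n\cdot\na\u^n-\u^{n-1}\cdot\na\u^{n-1})-\Dv\big((\na\d^n)^\top\na\d^n-(\na\d^{n-1})^\top\na\d^{n-1}\big),
\]
and in the director equation it is
\[
-(\u^n\cdot\na\d^n-\u^{n-1}\cdot\na\d^{n-1})+(|\na\d^n|^2\d^n-|\na\d^{n-1}|^2\d^{n-1}).
\]

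First I would linearize each difference of nonlinearities so that exactly one factor of $\delta\u^n$ or $\delta\d^n$ appears, using $\u^n\cdot\na\u^n-\u^{n-1}\cdot\na\u^{n-1}=\delta\u^n\cdot\na\u^n+\u^{n-1}\cdot\na\delta\u^n$, then $(\na\d^n)^\top\na\d^n-(\na\d^{n-1})^\top\na\d^{n-1}=(\na\delta\d^n)^\top\na\d^n+(\na\d^{n-1})^\top\na\delta\d^n$, and $|\na\d^n|^2\d^n-|\na\d^{n-1}|^2\d^{n-1}=\big((\na\d^n+\na\d^{n-1}):\na\delta\d^n\big)\d^n+|\na\d^{n-1}|^2\delta\d^n$. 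Next I would apply Theorem \ref{T4} to the velocity difference and Theorem \ref{T3} to the director difference. With vanishing initial data, setting
\[
\delta U^{n+1}(t):=\|\delta\u^{n+1}\|_{L^\infty(0,t;D_{A_q}^{1-\f{1}{p},p})}+\|\delta\u^{n+1}\|_{L^p(0,t;W^{2,q})}+\|\partial_t\delta\u^{n+1}\|_{L^p(0,t;L^q)}+\|\delta\d^{n+1}\|_{L^\infty(0,t;B_{q,p}^{3(1-\f{1}{p})})}+\|\delta\d^{n+1}\|_{\mathcal{W}(0,t)},
\]
the maximal regularity estimates bound both $\delta U^{n+1}(t)$ and $\|\na\delta P^{n+1}\|_{L^p(0,t;L^q)}$ by the $L^p(0,t;L^q)$-norm of the forcing above.

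I would then estimate each product by H\"older in space and time: the lower-order factor ($\u^{n-1}$, $\na\d^{n-1}$, $\d^n$, $\na(\d^n+\d^{n-1})$) goes into $L^\infty_tL^q_x$ or $L^\infty_tL^\infty_x$, controlled by the uniform bound $U^k\le 4CU^0$ and the embedding $B_{q,p}^{3(1-\f{1}{p})}\hookrightarrow L^\infty$; the derivative factor carrying the difference ($\na\delta\u^n$, $\na\delta\d^n$, $\D\delta\d^n$) goes into $L^p_tL^\infty_x$ or $L^p_tL^q_x$ via Lemmas \ref{l1}--\ref{l3} applied to $\delta\u^n,\delta\d^n$. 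Every resulting product then carries a positive power $t^{\f12-\f3{2q}}$ or $t^{\f13-\f1q}$, giving an estimate of the shape
\[
\delta U^{n+1}(t)\le C\big(t^{\f12-\f{3}{2q}}+t^{\f13-\f1q}\big)\big(U^0+(U^0)^2\big)\,\delta U^n(t),
\]
where the quadratic term $(U^0)^2$ originates from the cubic director nonlinearity. Since $\int_\O\delta P^{n+1}=0$, Poincar\'e gives $\|\delta P^{n+1}\|_{L^p(0,t;W^{1,q})}\le C\|\na\delta P^{n+1}\|_{L^p(0,t;L^q)}$, so the left side in fact dominates $\|(\delta\u^{n+1},\delta\d^{n+1},\delta P^{n+1})\|_{M^{p,q}_t}$. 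Choosing $T_0\in(0,T]$ so small that the prefactor is $\le\f12$ — possible precisely because both exponents are positive under the hypothesis $\f p2(1-\f3q)\in(0,1)$, i.e. $0<\f12-\f3{2q}$ — yields $\delta U^{n+1}(T_0)\le\f12\,\delta U^n(T_0)$, whence $\{(\u^n,\d^n,P^n)\}$ is Cauchy in $M^{p,q}_{T_0}$ and converges.

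The main obstacle is the strongly nonlinear director terms, namely the cubic $|\na\d|^2\d$ and the stress divergence $\Dv((\na\d)^\top\na\d)$. Differentiating through the divergence produces second-order factors such as $\D\delta\d^n$ paired with $\na\d^n$, and closing these in $L^p_tL^q_x$ forces the use of the full $W^{3,q}$-regularity of $\d$ together with the delicate interpolation estimate of Lemma \ref{l3} for $\|\D\d\|_{L^p_tL^\infty_x}$, beyond what the Navier--Stokes part requires. The crux is bookkeeping: assigning the difference to one factor while guaranteeing that every product still gains a strictly positive power of $t$, so that the contraction constant can be made less than $1$ by shrinking $T_0$.
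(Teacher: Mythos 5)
Your proposal is correct and follows essentially the same route as the paper: form the consecutive differences (which have zero initial data), telescope each nonlinear difference so that exactly one factor carries the increment, apply the maximal regularity of Theorems \ref{T3}--\ref{T4}, and use the uniform bound $U^n\le 4CU^0$ together with Lemmas \ref{l1}--\ref{l3} to extract positive powers of $t$ and obtain a contraction factor $\le\f12$ for $T_0$ small. The only cosmetic difference is in how the cubic term is telescoped (the paper attaches $|\nabla\d^n|^2$ to $\bar\d^{n-1}$ and $\d^{n-1}$ to the gradient increment, you do the symmetric variant), which changes nothing in the estimates.
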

\begin{proof}
Let
$$\bar{\u}^n:=\u^{n+1}-\u^n, \quad \bar{\F}^n:=\F^{n+1}-\F^n, \quad \bar{P}^n:=P^{n+1}-P^n.$$
Then, the triplet $(\bar{\u}^n, \bar{\F}^n, \bar{P}^n)$ satisfies
\begin{equation}\label{u}
\begin{cases}
\f{\partial\bar{\u}^{n}}{\partial t}-\D\bar{\u}^{n}+\nabla \bar{P}^{n}
=-\bar{\u}^{n-1}\cdot\nabla\u^{n}-\u^{n-1}\cdot\nabla\bar{\u}^{n-1}-\Dv\big((\nabla\bar{\F}^{n-1})^\top\nabla\F^{n}\big)
\\
\qquad\qquad\qquad\qquad\qquad-\Dv\big((\nabla\F^{n-1})^\top\nabla\bar{\F}^{n-1}\big), \\
\f{\partial\bar{\F}^{n}}{\partial
t}-\D\bar{\F}^{n}=-\bar{\u}^{n-1}\cdot\nabla \F^{n}-\u^{n-1}\cdot\nabla\bar{\F}^{n-1}+|\nabla\d^n|^2\bar{\d}^{n-1}\\
\qquad\qquad\qquad\quad
+\left((\nabla\d^n+\nabla\d^{n-1}):\nabla\bar{\d}^{n-1}\right)\d^{n-1},\\
\Dv\bar{\u}^{n}=0, \quad
\int_\O \bar{P}^n d\x=0
\end{cases}
\end{equation}
with the initial-boundary conditions:
$$(\bar{\u}^n,\bar{\d}^n)|_{t=0}=(0,0), \quad (\bar{\u}^n,\partial_{\bf\nu}\bar{\d}^n)|_{\partial\O}=(0,0).$$

 Define
\begin{equation*}
\begin{split}
 \bar{U}^n(t):=&\|\bar{\u}^n\|_{L^\infty(0,t;
D_{A_q}^{1-\f{1}{p},p})}+\|\bar{\u}^n\|_{L^p(0,t;
W^{2,q})}+\|\frac{\partial\bar{\u}^n}{\partial t}\|_{L^p(0,t; L^q)}\\
&+\|\nabla\bar{P}^n\|_{L^p(0,t; L^q)}+\|\bar{\F}^n\|_{L^\infty(0,t;
B_{q,p}^{3(1-\f{1}{p})})}+\|\bar{\F}^n\|_{\mathcal{W}(0,t)}.
\end{split}
\end{equation*}
By using Lemmas \ref{l1}-\ref{l3}, we obtain the following
estimates:
\begin{equation}\label{8}
\begin{split}
&\|\bar{\u}^{n-1}\cdot\nabla
\u^{n}+\u^{n-1}\cdot\nabla\bar{\u}^{n-1}\|_{L^p(0,t; L^q)}\\
&\le\|\bar{\u}^{n-1}\|_{L^\infty(0,t; L^q)}\|\nabla\u^n\|_{L^p(0,t;
L^\infty)}+\|\u^{n-1}\|_{L^\infty(0,t;
L^q)}\|\nabla\bar{\u}^{n-1}\|_{L^p(0,t; L^\infty)}\\
&\le
4CU^0\left(t^{\f{1}{2}-\f{3}{2q}}\|\bar{\u}^{n-1}\|_{L^\infty(0,t;
L^q)}+\|\nabla\bar{\u}^{n-1}\|_{L^p(0,t; L^\infty)}\right),
\end{split}
\end{equation}
\begin{equation}\label{9}
\begin{split}
&\|\Dv((\nabla\bar{\F}^{n-1})^\top\nabla\F^{n})
+\Dv((\nabla\F^{n-1})^\top\nabla\bar{\F}^{n-1})\|_{L^p(0,t;
L^q)}\\
&\le\|\nabla\F^n\|_{L^\infty(0,t;
L^q)}\|\triangle\bar{\F}^{n-1}\|_{L^p(0,t;
L^\infty)}+\|\nabla\bar{\F}^{n-1}\|_{L^\infty(0,t;
L^q)}\|\triangle\F^n\|_{L^p(0,t; L^\infty)}\\
&\quad+\|\nabla\bar{\F}^{n-1}\|_{L^\infty(0,t;
L^q)}\|\triangle\F^{n-1}\|_{L^p(0,t;
L^\infty)}+\|\nabla\F^{n-1}\|_{L^\infty(0,t;
L^q)}\|\triangle\bar{\F}^{n-1}\|_{L^p(0,t; L^\infty)}\\
&\le 8CU^0\left(\|\triangle\bar{\F}^{n-1}\|_{L^p(0,t;
L^\infty)}+t^{\f{1}{3}-\f{1}{q}}\|\nabla\bar{\F}^{n-1}\|_{L^\infty(0,t;
L^q)}\right),
\end{split}
\end{equation}
\begin{equation}\label{10}
\begin{split}
&\|\bar{\u}^{n-1}\cdot\nabla\F^n+\u^{n-1}\cdot\nabla\bar{\F}^{n-1}\|_{L^p(0,t;
L^q)}\\&\le\|\bar{\u}^{n-1}\|_{L^\infty(0,t;
L^q)}\|\nabla\F^n\|_{L^p(0,t; L^\infty)}+\|\u^{n-1}\|_{L^\infty(0,t;
L^q)}\|\nabla\bar{\F}^{n-1}\|_{L^p(0,t; L^\infty)}\\
&\le 4CU^0\left(t^{\f{1}{2}-\f{3}{2q}}\|\bar{\u}
^{n-1}\|_{L^\infty(0,t; L^q)}+\|\nabla\bar{\F}^{n-1}\|_{L^p(0,t;
L^\infty)}\right),
\end{split}
\end{equation}
and
\begin{equation}\label{11}
\begin{split}
&\||\nabla\d^n|^2\bar{\d}^{n-1}+\left((\nabla\d^n+\nabla\d^{n-1}):\nabla\bar{\d}^{n-1}\right)\d^{n-1}\|_{L^p(0,t;
L^q)}\\
&\le \left(4CU^0\right)^2\left(t^{\f{1}{2}-\f{3}{2q}}\|\bar{\d}^{n-1}\|_{L^\infty(0,t;L^\infty)}+2\|\nabla\bar{\F}^{n-1}\|_{L^p(0,t;
L^\infty)}\right).
\end{split}
\end{equation}
Applying Theorems \ref{T3}-\ref{T4} to \eqref{u},  with the help of
\eqref{8}-\eqref{11}, we have
\begin{equation}\label{12}
\begin{split}
 \bar{U}^n(t)&\le 4CU^0\Big(2t^{\f{1}{2}-\f{3}{2q}}\|\bar{\u}^{n-1}\|_{L^\infty(0,t;
L^q)}+\|\nabla\bar{\u}^{n-1}\|_{L^p(0,t;
L^\infty)}+2\|\triangle\bar{\F}^{n-1}\|_{L^p(0,t;
L^\infty)}\\
&\qquad\qquad+2t^{\f{1}{3}-\f{1}{q}}\|\nabla\bar{\F}^{n-1}\|_{L^\infty(0,t;
L^q)}+\|\nabla\bar{\F}^{n-1}\|_{L^p(0,t;
L^\infty)}\\
&\qquad\qquad+4CU^0\big(t^{\f{1}{2}-\f{3}{2q}}\|\bar{\F}^{n-1}\|_{L^\infty(0,t;
L^\infty)}+2\|\nabla\bar{\F}^{n-1}\|_{L^p(0,t; L^\infty)}\big)\Big).
\end{split}
\end{equation}
Combining \eqref{12} and Lemmas \ref{l1}-\ref{l3}, one has
\begin{equation*}
 \bar{U}^n(t)\le 16CU^0\left((1+3CU^0)t^{\f{1}{2}-\f{3}{2q}}+t^{\f{1}{3}-\f{1}{q}}\right)\bar{U}^{n-1}(t).
\end{equation*}
Thus, if we choose $T_0$ satisfying \eqref{TTT1} such that the condition
$$16CU^0(2+3CU^0){T_0}^{\f{1}{3}-\f{1}{q}}\le\f{1}{2},\ \ \textrm{or}\quad 16CU^0(2+3CU^0){T_0}^{\f{1}{2}-\f{3}{2q}}\le\f{1}{2}$$ is fulfilled, it is
clear that $\{(\u^n, \F^n, P^n)\}_{n=1}^\infty$ is a Cauchy
sequence in $M_{T_0}^{p,q}$ and thus converges in $M_{T_0}^{p,q}$.
\end{proof}

\subsection{The limit is a solution}
Since $\{(\u^n, \F^n, P^n)\}_{n=1}^\infty$ is a Cauchy sequence in $M_{T_0}^{p,q}$, then it converges.
 Let $(\u,\F, P)\in M_{{T_0}}^{p,q}$ be the limit of the sequence $\{(\u^n, \F^n, P^n)\}_{n=1}^\infty$ in $M_{T_0}^{p,q}$.
 We claim all those nonlinear terms in
\eqref{e5} converge to their corresponding terms in \eqref{e2} in
$\big(L^p(0,{T_0}; L^q(\O))\big)^3$.  Indeed, due to the convergence
of $\u^n$ to $\u$ in $ M_{{T_0}}^{p,q}$, we have
\begin{equation*}
\begin{split}
&\|\u^n\cdot\nabla\u^n-\u\cdot\nabla\u\|_{L^p(0,T_0;
L^q)}\\&=\|(\u^n-\u)\cdot\nabla\u^n+\u\cdot\nabla(\u^n-\u)\|_{L^P(0,{T_0};
L^q)}\\
&\le\|\u^n-\u\|_{L^\infty(0,{T_0}; L^q)}\|\nabla\u^n\|_{L^p(0,{T_0};
L^\infty)}+\|\u\|_{L^\infty(0,{T_0};
L^q)}\|\nabla\u^n-\nabla\u\|_{L^p(0,{T_0}; L^\infty)}\\
&\le 4CU^0{T_0}^{\f{1}{2}-\f{3}{2q}}\|\u^n-\u\|_{M_{{T_0}}^{p,q}}
+C{T_0}^{\f{1}{2}-\f{3}{2q}}\|\u\|_{L^\infty(0,{T_0};
L^q)}\|\u^n-\u\|_{M_{{T_0}}^{p,q}}\\
&\rightarrow 0.
\end{split}
\end{equation*}
  Hence,
$$\u^n\cdot\nabla\u^n\rightarrow\u\cdot\nabla\u\quad\textrm{in}\
\big(L^p(0,{T_0}; L^q(\O))\big)^3.$$ Similarly, we have
$$\nabla\cdot\big((\nabla\d^n)^\top\nabla\d^n\big)\rightarrow\nabla\cdot\big((\nabla\d)^\top\nabla\d\big)
\quad\textrm{in}\ \big(L^p(0,{T_0}; L^q(\O))\big)^3;$$
$$\u^{n}\cdot\nabla
\d^{n}\rightarrow\u\cdot\nabla\d\quad\textrm{in}\ \big(L^p(0,{T_0};
L^q(\O))\big)^3.$$ Since
\begin{equation*}
\begin{split}
&\||\nabla\d^n|^2\d^n- |\nabla\d|^2\d\|_{L^p(0,{T_0};L^q)}\\
&\leq \||\nabla\d^n|^2\d^n- |\nabla\d^n|^2\d\|_{L^p(0,{T_0};L^q)}+\||\nabla\d^n|^2\d-|\nabla\d|^2\d\|_{L^p(0,{T_0};L^q)}\\
&\leq \|\d^n-\d\|_{L^\infty(0,T_0;L^\infty)}\|\nabla\d^n\|_{L^\infty(0,T_0;L^q)}\|\nabla\d^n\|_{L^p(0,T_0;L^\infty)}\\
&\quad+\|\d\|_{L^\infty(0,T_0;L^\infty)}\|\nabla\d^n+\nabla\d\|_{L^\infty(0,T_0;L^q)}\|\nabla\d^n-\nabla\d\|_{L^p(0,T_0;L^\infty)}\\
&\leq \left((4CU^0)^2+\|\d\|_{L^\infty(0,T_0;L^\infty)}
(4CU^0+\|\nabla\d\|_{L^\infty(0,T_0;L^q)})\right)T_0^{\f{1}{2}-\f{3}{2q}}\|\d^n-\d\|_{M_{{T_0}}^{p,q}}\\
&\rightarrow 0,
\end{split}
\end{equation*}
then
$$|\nabla\d^n|^2\d^n\rightarrow |\nabla\d|^2\d\quad\textrm{in}\
\big(L^p(0,{T_0}; L^q(\O))\big)^3.$$ Thus, taking the limit as
$n\rightarrow\infty$ in \eqref{e5}, we conclude that \eqref{e2}
holds in $L^p(0,{T_0}; L^q(\O))$, and hence almost everywhere in
$\O\times (0,{T_0})$.

Multiply the $\d$-system, i.e., \eqref{e22} by $\d$, we obtain
\begin{equation*}
\frac{1}{2}\f{\partial|\d|^2}{\partial t}+\frac{1}{2}\u\cdot\nabla(|\d|^2)=\triangle\d\cdot\d+|\nabla\d|^2|\d|^2.
\end{equation*}
Since $$\triangle(|\d|^2)=2|\nabla\d|^2+2\d\cdot(\triangle\d),$$ then it follows that
\begin{equation*}
\frac{1}{2}\f{\partial|\d|^2}{\partial t}+\frac{1}{2}\u\cdot\nabla(|\d|^2)=\frac{1}{2}\triangle(|\d|^2)-|\nabla\d|^2+|\nabla\d|^2|\d|^2.
\end{equation*}
Therefore, it is easy to deduce that
\begin{equation}\label{s}
\f{\partial(|\d|^2-1)}{\partial t}-\triangle(|\d|^2-1)+\u\cdot\nabla(|\d|^2-1)-2|\nabla\d|^2(|\d|^2-1)=0.
\end{equation}
Multiplying \eqref{s} by $(|\d|^2-1)$ and then integrating over $\Omega$, using \eqref{e23} and \eqref{bc}, we get the following inequality:
\begin{equation}\label{ss}
\begin{split}
\frac{d}{dt}\int_{\Omega} (|\d|^2-1)^2d\x &
\le 4\int_{\Omega}|\nabla\d|^2(|\d|^2-1)^2d\x\\
 &\le 4\|\nabla\d\|_{L^\infty}^2\int_{\Omega}(|\d|^2-1)^2d\x.
\end{split}
\end{equation}
Remark that interpolating between $L^\infty(0,T_0; W^{1,q})$ and $L^p(0,T_0;W^{3,q})$  shows that for some  positive $\beta>\frac{1}{2}$,
 $\d$ belongs to $L^2(0,{T_0}; H^{2+\beta})$
and that $\|\nabla\d\|_{L^\infty}^2\in L^1(0,T_0)$. Notice that
$$\int_{\Omega}(|\d|^2-1)^2 d\x=0, \quad\text{at time}\quad t=0.$$
Thus, using \eqref{ss} together with Gr\"{o}nwall's inequality, it yields $|\d|=1$
 in $\O\times (0, T_0)$.

\subsection{Uniqueness}   Let $(\u_1, \F_1, P_1)$ and $(\u_2,
\F_2, P_2)$ be two solutions to \eqref{e2} with the initial-boundary
conditions \eqref{ic}-\eqref{bc}. Denote
$$\bar{\u}=\u_1-\u_2,\quad \bar{\F}=\F_1-\F_2,\quad \bar{P}=P_1-P_2.$$ Note
that the triplet $(\bar{\u}, \bar{\F}, \bar{P})$ satisfies the
following system:
\begin{equation*}
\begin{cases}
\frac{\partial\bar{\u}}{\partial t}-\D\bar{\u}+\nabla
\bar{P}=-\bar{\u}\cdot\nabla\u_1-\u_2\cdot\nabla\bar{\u}-\Dv\big((\nabla\d_1)^\top\nabla\bar{\d}\big)-\Dv\big((\nabla\bar{\d})^\top\nabla\d_2\big),\\
\frac{\partial\bar{\d}}{\partial t}-\D\bar{\d}=-\u_1\cdot\nabla\bar{\d}-\bar{\u}\cdot\nabla\d_2+|\nabla\d_1|^2\bar{\d}+\left((\nabla\d_1+\nabla\d_2):\nabla\bar{\d}\right)\d_2,\\
\Dv\bar{\u}=0, \quad \int_\O \bar{P}\ d\x=0
\end{cases}
\end{equation*}
with the initial-boundary conditions:
$$(\bar{\u},\bar{\F})|_{t=0}=(0,0), \quad (\bar{\u},\partial_{\bf \nu}\bar{\d})|_{\partial\O}=(0,0).$$

Define
\begin{equation*}
\begin{split}
X(t):=&\|\bar{\u}\|_{L^\infty(0,t;
D_{A_q}^{1-\f{1}{p},p})}+\|\bar{\u}\|_{L^p(0,t;
W^{2,q})}+\|\frac{\partial\bar{\u}}{\partial t}\|_{L^p(0,t; L^q)}\\
&+\|\nabla\bar{P}\|_{L^p(0,t;L^q)}+\|\bar{\F}\|_{L^\infty(0,t;
B_{q,p}^{3(1-\f{1}{p})})}+\|\bar{\F}\|_{\mathcal{W}(0,t)}.
\end{split}
\end{equation*}
Thus, repeating the arguments in \eqref{8}-\eqref{11}, we have
\begin{equation*}
\begin{split}
X(t)&\le 16CU^0\left((1+3CU^0)t^{\f{1}{2}-\f{3}{2q}}+t^{\f{1}{3}-\f{1}{q}}\right)X(t)\\
&\le\f{1}{2}X(t).
\end{split}
\end{equation*}
Hence, $X(t)=0$ for all $t\in [0,{T_0}]$, which guarantees the
uniqueness on the time interval $[0,{T_0}]$.

\bigskip
\section{Global Existence}

In this section, we prove that, if  the initial data is
sufficiently small, the local solution established in the previous
section is indeed global in time. To this end, we first denote
by $T^*$ the maximal time of existence for $(\u, \F, P)$. Define
the function $H(t)$ as
\begin{equation*}
\begin{split}
H(t):=&\|\u\|_{L^\infty(0,t;
D_{A_q}^{1-\f{1}{p},p})}+\|\u\|_{L^p(0,t;
W^{2,q})}+\|\frac{\partial\u}{\partial t}\|_{L^p(0,t;
L^q)}\\&\quad+\|\nabla P\|_{L^p(0,t; L^q)}+\|\F\|_{L^\infty(0,t;
B_{q,p}^{3(1-\f{1}{p})})}+\|\F\|_{\mathcal{W}(0,t)}.
\end{split}
\end{equation*}

To extend the local solution, we need to control the maximal time
$T^*$ only in terms of the initial data. For this purpose, it is
obvious to observe that $H(t)$ is an increasing and continuous
function in $[0, T^*)$, and for all $t\in [0,T^*)$, we have, using
Theorems \ref{T3}-\ref{T4},
\begin{equation}\label{18}
\begin{split}
H(t)\le &C\Big(U^0+\|\u\cdot\nabla\u\|_{L^p(0,t;
L^q)}+\|\nabla\cdot\big((\nabla\d)^\top\nabla\d\big)\|_{L^p(0,t;
L^q)}\\
&\qquad\quad+\|\u\cdot\nabla\d\|_{L^p(0,t; L^q)}+\||\nabla\d|^2\d\|_{L^p(0,t; L^q)}\Big).
\end{split}
\end{equation}
On the other hand, Lemmas \ref{l1}-\ref{l3} imply that
\begin{equation}\label{20}
\begin{split}
\|\u\cdot\nabla\u\|_{L^p(0,t; L^q)}&\le \|\u\|_{L^\infty(0,t;
L^q)}\|\nabla\u\|_{L^p(0,t; L^\infty)}\\&\le
Ct^{\f{1}{2}-\f{3}{2q}}H^2(t),
\end{split}
\end{equation}
\begin{equation}\label{21}
\begin{split}
\|\nabla\cdot\big((\nabla\d)^\top\nabla\d\big)\|_{L^p(0,t; L^q)}&\le
C\|\nabla\F\|_{L^\infty(0,t; L^q)}\|\triangle\F\|_{L^p(0,t;
L^\infty)}\\&\leq Ct^{\f{1}{3}-\f{1}{q}}H^2(t),
\end{split}
\end{equation}
\begin{equation}\label{22}
\begin{split}
\|\u\cdot\nabla\F\|_{L^p(0,t; L^q)} &\le \|\u\|_{L^\infty(0,t;
L^q)}\|\nabla\F\|_{L^p(0,t; L^\infty)}\\&\le
Ct^{\f{1}{2}-\f{3}{2q}}H^2(t),
\end{split}
\end{equation}
and for the fact that $|\d|=1$, we have
\begin{equation}\label{221}
\begin{split}
\||\nabla\d|^2\d\|_{L^p(0,t; L^q)} &\le \|\nabla\d\|_{L^\infty(0,t;
L^q)}\|\nabla\d\|_{L^p(0,t; L^\infty)}\\&\le
Ct^{\f{1}{2}-\f{3}{2q}}H^2(t).
\end{split}
\end{equation}

Substituting \eqref{20}-\eqref{221} into \eqref{18}, we get
\begin{equation}\label{23}
H(t)\le C\left(U^0+(3t^{\f{1}{2}-\f{3}{2q}}+t^{\f{1}{3}-\f{1}{q}})H^2(t)\right).
\end{equation}

Assume that $T$ is the smallest number such that
$$H(T)=4CU^0.$$ This is possible because $H(t)$ is an increasing and
continuous function in time. Then,
$$H(t)<H(T)=4CU^0,\quad\textrm{for all}\quad t\in[0,T),$$ and
from \eqref{23}, we deduce that
$$16C^2U^0\left(3T^{\f{1}{2}-\f{3}{2q}}+T^{\f{1}{3}-\f{1}{q}}\right)\geq 3.$$
This implies that the maximal time of existence $T^\ast$ will go to
infinity when the initial data  approaches zero. More precisely,  we
can show that,  if the initial data is sufficiently small, the
solution exists globally in time. To this end, we need some other
estimates for the terms on the right side of \eqref{18}. Indeed, by
$W^{1,q}\hookrightarrow L^\infty \ \textrm {as}\ q>3$, we have
\begin{equation*}
\begin{split}
\|\u\cdot\nabla\u\|_{L^p(0,t; L^q)}&\le \|\u\|_{L^\infty(0,t;
L^q)}\|\nabla\u\|_{L^p(0,t; L^\infty)}\\
&\le CH^2(t).
\end{split}
\end{equation*}
Similarly,
$$\|\nabla\cdot\big((\nabla\d)^\top\nabla\d\big)\|_{L^p(0,t; L^q)},\  \|\u\cdot\nabla\F\|_{L^p(0,t; L^q)},\ \||\nabla\d|^2\d\|_{L^p(0,t; L^q)}\le CH^2(t).$$
Thus, \eqref{18} turns out to be
\begin{equation}\label{40}
H(t)\le C\left(U^0+H^2(t)\right).
\end{equation}
 Now we take $U^0$ sufficiently small such that $$U^0\leq \delta_0:=\frac{1}{4C^2}.$$ Then, we compute directly from
\eqref{40} and the continuity of $H(t)$ that
$$H(t)\leq \frac{1-\sqrt{1-4C^2U^0}}{2C}\leq \frac{1}{2C}$$ for all $t\in [0,T^*)$, which implies
$\|(\u, \F, P)\|_{M^{p,q}_{T^*}}$ bounded.
Hence, according to the local existence in the previous section, we
can extend the solution on $[0,T^*)$ to some larger interval $[0,
T^*+\eta)$ with $\eta>0$. This is impossible since
 $T^*$ is already the maximal time of existence. Hence, when the initial data are sufficiently small,
 the strong solution is indeed global in time.

 The proof of Theorem \ref{T1} is complete.

\bigskip
\section{Weak-Strong Uniqueness}

 The purpose of this section is to show
$\textit{Weak-Strong Uniqueness}$ in Theorem \ref{T2}. To this end,
we first formally deduce and obtain an energy estimate for the
strong solution to \eqref{e2}-\eqref{bc}.
\begin{Lemma}\label{l4}
Let $p,q$ satisfy the same conditions as Theorem \ref{T1} and $(\u,
\F, P)\in M_T^{p,q}$ be the unique solution to \eqref{e2} on
$\O\times [0, T]$. Then for any $0<t\leq T$, we have
\begin{equation*}
\begin{split}
&\int_\O(|\u(t)|^2+|\nabla\d(t)|^2) d\x+2\int_0^t\!\!\int_\O
(|\nabla\u|^2+|\triangle\d+|\nabla\d|^2\d|^2) d\x ds\\&
=\int_\O(|\u_0|^2+|\nabla\d_0|^2)d\x.
\end{split}
\end{equation*}
\end{Lemma}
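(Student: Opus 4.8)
The plan is to derive the energy identity directly by testing the two equations of \eqref{e2} against the natural quantities and combining the results. Since $(\u,\d,P)\in M_T^{p,q}$ is a strong solution, all the manipulations below are justified (all terms lie in the appropriate $L^p(0,T;L^q)$ spaces, $\d$ enjoys the extra regularity $\d\in L^2(0,T;H^{2+\beta})$ with $\beta>\tfrac12$ noted in Section 4, and $|\d|=1$ a.e.), so that the computation is rigorous rather than merely formal.

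First I would treat the velocity equation \eqref{e21}. Taking the $L^2(\O)$ inner product of \eqref{e21} with $\u$, using $\Dv\u=0$ together with $\u|_{\partial\O}=0$ to eliminate the pressure term $(\nabla P,\u)=0$ and the convection term $(\u\cdot\nabla\u,\u)=0$, and integrating the viscous term by parts, yields
\begin{equation*}
\f12\f{d}{dt}\int_\O|\u|^2\,d\x+\int_\O|\nabla\u|^2\,d\x=-\int_\O\nabla\cdot\big((\nabla\d)^\top\nabla\d\big)\cdot\u\,d\x.
\end{equation*}
The right-hand side, after integrating by parts, equals $\int_\O(\nabla\d)^\top\nabla\d:\nabla\u\,d\x$, which is the coupling term that must be cancelled against a corresponding term from the $\d$-equation.

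Next I would treat the orientation equation \eqref{e22}. Here the key is to test against $-(\triangle\d+|\nabla\d|^2\d)$, i.e.\ against the negative of the right-hand side of \eqref{e22}. Since $|\d|=1$, one has $\triangle\d\cdot\d+|\nabla\d|^2=0$, so that $\triangle\d+|\nabla\d|^2\d$ is exactly the tension field; multiplying \eqref{e22} by $-\triangle\d$ and integrating gives, after an integration by parts using the Neumann condition $\partial_{\bf\nu}\d|_{\partial\O}=0$, the term $\tfrac12\tfrac{d}{dt}\int_\O|\nabla\d|^2\,d\x$ on the left. The production of the full square $|\triangle\d+|\nabla\d|^2\d|^2$ requires expanding and using $|\d|=1$ to discard the terms that vanish (for instance $\int_\O|\nabla\d|^2\d\cdot\triangle\d\,d\x$ combines with $\int_\O|\nabla\d|^4\,d\x$ correctly because $\d\cdot\triangle\d=-|\nabla\d|^2$). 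Simultaneously the transport term $\u\cdot\nabla\d$ produces a term $\int_\O(\u\cdot\nabla\d)\cdot\triangle\d\,d\x$ which, after integrating by parts and invoking $\Dv\u=0$ and the boundary conditions, reproduces exactly the coupling term $-\int_\O(\nabla\d)^\top\nabla\d:\nabla\u\,d\x$ that cancels the contribution from the velocity equation.

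Finally I would add the two identities; the coupling terms cancel, leaving
\begin{equation*}
\f12\f{d}{dt}\int_\O\big(|\u|^2+|\nabla\d|^2\big)\,d\x+\int_\O\big(|\nabla\u|^2+|\triangle\d+|\nabla\d|^2\d|^2\big)\,d\x=0,
\end{equation*}
and integrating from $0$ to $t$ gives the stated identity. The main obstacle is the bookkeeping in the $\d$-equation: one must justify that the nonlinear production term $|\nabla\d|^2\d$ assembles, together with $\triangle\d$, into the perfect square $|\triangle\d+|\nabla\d|^2\d|^2$, and that every boundary and integration-by-parts step is legitimate. This relies crucially on the constraint $|\d|=1$ (hence $\d\cdot\partial_{x_i}\d=0$ and $\d\cdot\triangle\d=-|\nabla\d|^2$) and on the extra regularity of $\d$ guaranteeing that $|\nabla\d|\in L^4(\O\times[0,t])$, so that all integrals are finite; the coupling cancellation is then the only delicate algebraic point.
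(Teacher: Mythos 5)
Your proposal is correct and follows essentially the same route as the paper: test \eqref{e21} with $\u$, test \eqref{e22} with $-(\triangle\d+|\nabla\d|^2\d)$, use $|\d|=1$ together with $\nabla\cdot(\nabla\d\odot\nabla\d)=\nabla\big(\tfrac{|\nabla\d|^2}{2}\big)+(\nabla\d)^\top\triangle\d$ to make the coupling terms cancel and the perfect square appear, then add and integrate in time, with the same interpolation-based regularity justification. The only cosmetic difference is that you integrate the coupling terms by parts into the form $\int_\O(\nabla\d)^\top\nabla\d:\nabla\u\,d\x$, whereas the paper cancels $\int_\O(\u\cdot\nabla\d)\cdot\triangle\d\,d\x$ against $-\int_\O\u\cdot\big((\nabla\d)^\top\triangle\d\big)\,d\x$ directly, these being the same integral.
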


\begin{proof}
Note that
$$\u\in C([0,T]; D_{A_q}^{1-\f{1}{p},p})\cap
L^p(0,T; W^{2,q}\cap W_0^{1,q}),$$
$$\d\in C([0,T]; B_{q,p}^{3(1-\f{1}{p})})\cap L^p(0,T; W^{3,q}),$$   $$D_{A_q}^{1-\f{1}{p},p}\hookrightarrow B^{2(1-\f{1}{p})}_{q,p}\cap
X^q\ \textrm{(see Proposition 2.5 in \cite{D})}, $$  where
$$X^q=\{{\bf z}\in L^q(\O)^3\mid \nabla\cdot {\bf z}=0\ \
\textrm{in}\ \Omega \ \ \textrm{and} \ {\bf z}\cdot{\bf n}=0\ \
\textrm{on}\
\partial\Omega\},$$
for some  positive $\beta>\frac{1}{2}$, it follows from the standard
interpolation
 inequalities that
 $$\u\in C([0,T]; H^\beta)\cap L^2(0,T; H^{1+\beta}), \ \u\in L^4(\O\times [0,T]),$$  $$\F\in C([0,T]; H^{1+\beta})\cap L^2(0,T; H^{2+\beta}),\ \nabla\d\in L^4(\O\times [0,T]).$$  This enables us to justify the following
computations.

Multiplying \eqref{e21} by $\u$, integrating over $\Omega$, we get
\begin{equation}\label{16}
\frac{1}{2}\f{d}{dt}\int_\O|\u|^2d\x+\int_\O|\nabla\u|^2d\x=-\int_\O\u\cdot\big((\nabla\d)^\top\triangle\d\big)\
d\x.
\end{equation}
 Here we have used the facts
$$\nabla\cdot(\nabla\d\odot\nabla\d)=\nabla\left(\frac{|\nabla\d|^2}{2}\right)+(\nabla\d)^\top\triangle\d,$$
and  $\nabla\cdot\u=0$ in $\Omega$, $\u=0$ on $\partial\Omega$,
as well as
$$\int_\O\u\cdot\nabla\u\cdot\u\ d\x=\int_\O\nabla P\cdot\u \ d\x=\int_\O \nabla\left(\frac{|\nabla\d|^2}{2}\right)\cdot\u\ d\x=0.$$
Multiplying \eqref{e22} by $-(\triangle\d+|\nabla\d|^2\d)$ and integrating over
$\Omega$, we obtain
\begin{equation*}
-\int_\O\frac{\partial\d}{\partial t}\cdot\triangle\d\ d\x-\int_\O
(\u\cdot\nabla\d)\cdot \triangle\d \ d\x=-\int_\O |\triangle\d+|\nabla\d|^2\d|^2
d\x.
\end{equation*}
Here we have used the fact that $|\d|=1$ to get
$$\left(\frac{\partial\d}{\partial t}+\u\cdot\nabla\d\right)\cdot|\nabla\d|^2\d
=\frac{1}{2}\left(|\nabla\d|^2\frac{\partial|\d|^2}{\partial t}+\u\cdot\nabla|\d|^2|\nabla\d|^2\right)=0.$$
Since $\partial_{\bf\nu} \d=0$ on $\partial\O$, integrating by parts, we have $$\int_\O\frac{\partial \d}{\partial t}\cdot\triangle\d \ d\x=-\frac{1}{2}\frac{d}{dt}\int_\O|\nabla\d|^2\ d\x.$$
Hence we obtain
\begin{equation}\label{171}
\frac{1}{2}\f{d}{dt}\int_\O|\nabla\d|^2 d\x+\int_\O
|\triangle\d+|\nabla\d|^2\d|^2 \ d\x=\int_\O
(\u\cdot\nabla\d)\cdot \triangle\d \ d\x.
\end{equation}
By adding \eqref{16} and \eqref{171}, we eventually get the identity:
\begin{equation}\label{167}
\frac{1}{2}\f{d}{dt}\int_\O(|\u|^2+|\nabla\F|^2) d\x+\int_\O
(|\nabla\u|^2+|\triangle\d+|\nabla\d|^2\d|^2) d\x=0,
\end{equation}
for all $t\in (0, T]$.

Integrating \eqref{167} over the time interval $[0,t]$, we obtain
the energy equality of this lemma.
\end{proof}
We remark that \eqref{167} is usually called the basic energy law
governing the system \eqref{e2}-\eqref{bc}. It reflects the energy
dissipation property of the flow of liquid crystals.

\vspace{3mm}


 Now we proceed to prove  $\textit{Weak-Strong
Uniqueness}$. In view of the regularity of $\u$, we deduce from the
weak formulation of \eqref{e21} that
\begin{equation}\label{27}
\begin{split}
&\int_\O \tilde{\u}\cdot \u \ d\x+\int_0^t\!\!\int_\O\nabla\tilde{\u}:\nabla\u \ d\x ds\\
&=\int_\O|\u_0|^2\
d\x-\int_0^t\!\!\int_\O(\nabla\tilde{\d})^\top\triangle\tilde{\d}\cdot
\u\ d\x ds +\int_0^t\!\!\int_\O \tilde{\u}\cdot\left(\f{\partial \u}{\partial
s}+\tilde{\u}\cdot\nabla \u\right)\ d\x ds.
\end{split}
\end{equation}

On the other hand, since $\u$ satisfies
\eqref{e21}, i.e.,
$$\f{\partial\u}{\partial t}=\D\u-\u\cdot\nabla\u-\nabla P-\nabla\left(\frac{|\nabla\d|^2}{2}\right)-(\nabla\d)^\top\triangle\d,$$ then we
have, from \eqref{27},
\begin{equation}\label{28}
\begin{split}
&\int_\O \tilde{\u}\cdot \u \ d\x-\int_\O|\u_0|^2\ d\x\\
&=-2\int_0^t\!\!\int_\O\nabla\tilde{\u}:\nabla\u\ d\x
ds-\int_0^t\!\!\int_\O(\nabla\tilde{\d})^\top\triangle\tilde{\d}\cdot\u\ d\x ds\\
&\quad-\int_0^t\!\!\int_\O\tilde{\u}\cdot\left(\u\cdot\nabla\u+(\nabla\d)^\top\triangle\d-\tilde{\u}\cdot\nabla\u\right)\
d\x ds.
\end{split}
\end{equation}
Similarly, in view of the regularity of $\d$, we have
\begin{equation}\label{271}
\begin{split}
&\int_\O\nabla\tilde{\d}:\nabla\d\ d\x-\int_\O|\nabla\d_0|^2\ d\x\\
&=\int_0^t\!\!\int_\O\big(-\tilde{\d}\cdot \triangle\d_s+\tilde{\u}\cdot\nabla\tilde{\d}\cdot
\triangle\d-\triangle\tilde{\d}\cdot\triangle\d-|\nabla\tilde{\d}|^2\tilde{\d}\cdot
\triangle\d\big)\ d\x ds.
\end{split}
\end{equation}
Taking advantage of \eqref{e22}, we obtain, from \eqref{271},
\begin{equation}\label{272}
\begin{split}
&\int_\O\nabla\tilde{\d}:\nabla\d\ d\x-\int_\O|\nabla\d_0|^2\ d\x\\
&=\int_0^t\!\!\int_\O\Big(-2\triangle\tilde{\d}\cdot
\triangle\d+\u\cdot\nabla\d\cdot\triangle\tilde{\d}+\tilde{\u}\cdot\nabla\tilde{\d}\cdot\triangle\d\\
&\qquad\qquad\qquad -|\nabla\d|^2\d\cdot
\triangle\tilde{\d}-|\nabla\tilde{\d}|^2\tilde{\d}\cdot\triangle\d\Big)\ d\x
ds.
\end{split}
\end{equation}
From \eqref{26}, \eqref{28}, \eqref{272} and the fact that $(\u, \d, P)$ \big(resp. $(\tilde{\u}, \tilde{\d}, \Pi )$\big) is a strong
solution (resp. weak solution) to \eqref{e2} with the
initial-boundary conditions
\eqref{ic}-\eqref{bc},
 we have
the following energy estimate of $(\u-\tilde{\u}, \d-\tilde{\d})$:
\begin{equation}\label{m}
\begin{split}
&\frac{1}{2}\int_\Omega(|\u-\tilde{\u}|^2+|\nabla\d-\nabla\tilde{\d}|^2)\ d\x\\&\leq
-\int_0^t\!\!\int_\O(|\nabla\u-\nabla\tilde{\u}|^2+|\triangle\d-\triangle\tilde{\d}|^2)\
d\x ds\\
&\quad-\int_0^t\!\!\int_\O\Big((\u-\tilde{\u})\cdot\nabla\u\cdot(\u-\tilde{\u})+(\nabla\d-\nabla\tilde{\d})\cdot\triangle\d\cdot(\u-\tilde{\u})\\
&\qquad\qquad\qquad-\u\cdot(\nabla\d-\nabla\tilde{\d})\cdot(\triangle\d-\triangle\tilde{\d})\\
&\qquad\qquad\qquad +(|\nabla\d|^2\d-|\nabla\tilde{\d}|^2\tilde{\d})\cdot(\triangle\d-\triangle\tilde{\d})\Big)\
d\x ds\\
&=-\int_0^t\!\!\int_\O(|\nabla\u-\nabla\tilde{\u}|^2+|\triangle\d-\triangle\tilde{\d}|^2)\
d\x ds+I,
\end{split}
\end{equation}
where
\begin{equation*}
\begin{split}
I=&-\int_0^t\!\!\int_\O\Big((\u-\tilde{\u})\cdot\nabla\u\cdot(\u-\tilde{\u})+(\nabla\d-\nabla\tilde{\d})\cdot\triangle\d\cdot(\u-\tilde{\u})\\
&\qquad\qquad\quad-\u\cdot(\nabla\d-\nabla\tilde{\d})\cdot(\triangle\d-\triangle\tilde{\d})\\
&\qquad\qquad\quad+(|\nabla\d|^2\d-|\nabla\tilde{\d}|^2\tilde{\d})\cdot(\triangle\d-\triangle\tilde{\d})\Big)\
d\x ds.
\end{split}
\end{equation*}
Next, we will estimate $I$ term by term. By the zero boundary
condition, we have
\begin{equation}\label{30}
\begin{split}
-\int_\O(\u-\tilde{\u})\cdot\nabla\u\cdot(\u-\tilde{\u})\ d\x
&=\int_\O(\u-\tilde{\u})\cdot\nabla\tilde{\u}\cdot \u\ d\x\\
&=\int_\O(\u-\tilde{\u})\cdot(\nabla\u-\nabla\tilde{\u})\cdot \u\
d\x\\&\leq \|\u\|_{L^\infty}\|\nabla\u-\nabla\tilde{\u}\|_{L^2}\|\u-\tilde{\u}\|_{L^2}\\
&\leq \frac{1}{2}\|\nabla\u-\nabla\tilde{\u}\|_{L^2}^2+\frac{\|\u\|_{L^\infty}^2}{2}\|\u-\tilde{\u}\|_{L^2}^2,
\end{split}
\end{equation}
\begin{equation}\label{31}
\begin{split}
-\int_\O(\nabla\d-\nabla\tilde{\d})\cdot\triangle\d\cdot(\u-\tilde{\u})\ d\x
&\leq \|\triangle\d\|_{L^\infty}\|\u-\tilde{\u}\|_{L^2}\|\nabla\d-\nabla\tilde{\d}\|_{L^2}\\
&\leq \frac{\|\triangle\d\|_{L^\infty}}{2}(\|\u-\tilde{\u}\|_{L^2}^2+\|\nabla\d-\nabla\tilde{\d}\|_{L^2}^2),
\end{split}
\end{equation}
\begin{equation}\label{32}
\begin{split}
\int_\O\u\cdot(\nabla\d-\nabla\tilde{\d})\cdot(\triangle\d-\triangle\tilde{\d})\
d\x&\leq \|\u\|_{L^\infty}\|\triangle\d-\triangle\tilde{\d}\|_{L^2}\|\nabla\d-\nabla\tilde{\d}\|_{L^2}\\
&\leq \frac{1}{2}\|
\triangle\d-\triangle\tilde{\d}\|_{L^2}^2+\frac{\|\u\|_{L^\infty}^2}{2}\|\nabla\d-\nabla\tilde{\d}\|_{L^2}^2,
\end{split}
\end{equation}
\begin{equation}\label{34}
\begin{split}
&-\int_\O(|\nabla\d|^2\d-|\nabla\tilde{\d}|^2\tilde{\d})\cdot(\triangle\d-\triangle\tilde{\d})\
d\x\\&\leq
\|\nabla\d\|^2_{L^\infty}\|\d-\tilde{\d}\|_{L^2}\|\triangle\d-\triangle\tilde{\d}\|_{L^2}\\
&\qquad +\|\nabla\d
+\nabla\tilde{\d}\|_{L^\infty}\|\nabla\d-\nabla\tilde{\d}\|_{L^2}\|\triangle\d-\triangle\tilde{\d}\|_{L^2}\\
&\leq \frac{1}{2}
\|\triangle\d-\triangle\tilde{\d}\|_{L^2}^2+\|\nabla\d\|^4_{L^\infty}
\|\d-\tilde{\d}\|_{L^2}^2+\|\nabla\d+\nabla\tilde{\d}\|^2_{L^\infty}\|\nabla\d-\nabla\tilde{\d}\|^2_{L^2}.
\end{split}
\end{equation}
Then, we eventually get from \eqref{30}-\eqref{34} that
\begin{equation}\label{I}
\begin{split}
I&\leq \int_0^t
\left(\frac{1}{2}\|\nabla\u-\nabla\tilde{\u}\|_{L^2}^2+\|\triangle\d-\triangle\tilde{\d}\|_{L^2}^2+\frac{\|\u\|^2_{L^\infty}
+\|\triangle\d\|_{L^\infty}}{2}\|\u-\tilde{\u}\|_{L^2}^2\right.\\
&\qquad\qquad\left.+\big(\|\nabla\d+\nabla\tilde{\d}\|^2_{L^\infty}+\frac{\|\u\|^2_{L^\infty}
+\|\triangle\d\|_{L^\infty}}{2}\big)\|\nabla\d-\nabla\tilde{\d}\|_{L^2}^2\right.\\
&\qquad\qquad\left.+\|\nabla\d\|_{L^\infty}^4\|\d-\tilde{\d}\|^2_{L^2}\right)ds.
\end{split}
\end{equation}
Now, we wish to estimate $\|\d-\tilde{\d}\|_{L^2}$ . We write
\begin{equation}\label{w1911}
\partial_t(\d-\tilde{\d})+\u\cdot\nabla(\d-\tilde{\d})+(\u-\tilde{\u})\cdot\nabla\tilde{\d}=\triangle\d-\triangle\tilde{\d}+|\nabla\d|^2\d-|\nabla\tilde{\d}|^2\tilde{\d}.
\end{equation}
Multiply \eqref{w1911} by $\d-\tilde{\d}$ and integrate over
$\O\times (0,t)$, we have
\begin{equation*}
\begin{split}
&\frac{1}{2}\int_\O|\d-\tilde{\d}|^2\ d\x\\
&=-\int_0^t\int_{\O}(\u-\tilde{\u})\cdot\nabla\tilde{\d}\cdot(\d-\tilde{\d})\
d\x ds-\int_0^t\int_{\O}|\nabla\d-\nabla\tilde{\d}|^2\ d\x
ds\\
&\quad+\int_0^t\int_{\O}|\nabla\d|^2|\d-\tilde{\d}|^2\ d\x ds
+\int_0^t\int_{\O}(\nabla\d+\nabla\tilde{\d}):(\nabla\d-\nabla\tilde{\d})\
\tilde{\d}\cdot(\d-\tilde{\d})\ d\x ds.
\end{split}
\end{equation*}
 Using Sobolev's inequality $\|\u-\tilde{\u}\|_{L^6}\leq
C\|\nabla\u-\nabla\tilde{\u}\|_{L^2}$ and for some $\varepsilon>0$\
small enough, it is easy to get
\begin{equation}\label{w1922}
\begin{split}
&\frac{1}{2}\int_\O|\d-\tilde{\d}|^2\ d\x\\
&\leq
\int_0^t\left(C_\varepsilon\|\nabla\tilde{\d}\|_{L^3}^2+\|\nabla\d\|_{L^\infty}^2+\frac{1}{2}\right)\int_{\O}|\d-\tilde{\d}|^2\
d\x ds+\varepsilon\int_0^t\int_{\O}|\nabla\u-\nabla\tilde{\u}|^2\
\ d\x ds\\
&\quad+\int_0^t\left(1+\frac{\|\nabla\d+\nabla\tilde{\d}\|^2_{L^\infty}}{2}\right)\int_{\O}|\nabla\d-\nabla\tilde{\d}|^2\
d\x ds.
\end{split}
\end{equation}
Now we have from \eqref{m} \eqref{I} and \eqref{w1922} that
\begin{equation}\label{f}
\begin{split}
&\frac{1}{2}\int_\Omega(|\u-\tilde{\u}|^2+|\d-\tilde{\d})|^2+|\nabla\d-\nabla\tilde{\d})|^2)\
d\x\\& \leq
\int_0^t\frac{\|\u\|^2_{L^\infty}+\|\triangle\d\|_{L^\infty}}{2}\int_{\O}|\u-\tilde{\u}|^2\
d\x ds\\
&\quad
+\int_0^t\left(\frac{1}{2}+\|\nabla\d\|^4_{L^\infty}+C\|\nabla\tilde{\d}\|_{L^3}^2+\|\nabla\d\|^2_{L^\infty}\right)\int_{\O}|\d-\tilde{\d}|^2\
d\x ds\\
&\quad+
\int_0^t\left(1+\frac{\|\u\|_{L^\infty}^2+\|\triangle\d\|_{L^\infty}}{2}+\frac{3\|\nabla\d+\nabla\tilde{\d}\|^2_{L^\infty}}{2}\right)\int_{\O}|\nabla\d-\nabla\tilde{\d}|^2\
d\x ds
\\& \leq
C\int_0^t\va(s)\int_\Omega(|\u-\tilde{\u}|^2+|\d-\tilde{\d}|^2+|\nabla\d-\nabla\tilde{\d}|^2)\
d\x ds,
\end{split}
\end{equation}
where
$$\va(s)=1+\|\nabla\d\|^4_{L^\infty}+\|\nabla\d\|^2_{L^\infty}+\|\nabla\tilde{\d}\|^2_{L^\infty}+\|\u\|^2_{L^\infty}
+\|\triangle\d\|_{L^\infty}.$$

 Notice that
$\|\nabla\d\|^2_{L^\infty}, \|\u\|^2_{L^\infty},
\|\triangle\d\|_{L^\infty}\in L^1(0,t)$. Moreover, by applying the
quasi-linear equations of parabolic type estimates (cf. \cite{OAL}
Chapter VI, Section 2) to \eqref{e22},
 we see $\d(\cdot, t),\ \tilde{\d}(\cdot, t)\in C^{1, \alpha}$ with respect to the space
variables, for some $\alpha>0$, and its $C^{1, \alpha}$ norm is
independent of $t$. Then we have $\va(s)\in L^1(0,t)$. Applying
Gr\"{o}nwall's inequality to \eqref{f}, we obtain
$$\int_\Omega(|\u-\tilde{\u}|^2+|\d-\tilde{\d}|^2+|\nabla\d-\nabla\tilde{\d}|^2)\ d\x=0$$ for
all $t$. Thus,
 $\u=\tilde{\u}, \d=\tilde{\d} \ a.e. $ and $P=\Pi$ up to a constant in
$\O\times(0,T)$.

The proof of Theorem \ref{T2} is now complete.

\bigskip\bigskip

\section*{Acknowledgments}
X.  Li's research was supported in part by a joint project from the
NSAF of China (China Scholarship Council), and by Doctoral
Innovation Fund of Tsinghua University. D. Wang's research was
supported in part by the National Science Foundation under Grant
DMS-0906160 and by the Office of Naval Research under Grant
N00014-07-1-0668.

\bigskip\bigskip


\begin{thebibliography}{99}

\bibitem{HA} H. Amann,
\emph{Linear and quasilinear parabolic problems.} Vol. I. Abstract
linear theory. Birkh\"{a}user Boston, Inc., Boston, 1995.

\bibitem{BL} J. Bergh, J.  L\"{o}fstr\"{o}m,
\emph{Interpolation spaces. An introduction.} Grundlehren der
Mathematischen Wissenschaften, Springer-Verlag, Berlin-New York,
1976.

\bibitem{CKN}
L. Caffarelli, R. Kohn, L. Nirenberg,  \emph{Partial regularity of suitable weak solutions of Navier-Stokes equations.}
Comm. Pure. Appl. Math.  35 (1982), 771-831.



\bibitem{D} R. Danchin,
\emph{Density-dependent incompressible fluids in bounded domains.}
J. Math. Fluid Mech. 8 (2006), 333-381.



\bibitem{JLE} J. L. Ericksen,
\emph{Hydrostatic theory of liquid crystal.}
Arch. Rational Mech. Anal. 9 (1962), 371-378.

\bibitem{EB} B. Desjardins,
 \emph{Regularity of weak solutions of the compressible isentropic Navier-Stokes equations.}  Comm. Partial
Differential Equations 22 (1997), 977-1008.

\bibitem{Eri}
J. L. Ericksen,  \emph{Conservation laws for liquid crystals.}
Trans. Soc. Rheology 5 (1961), 23-34.

\bibitem{Eri2}
J. L. Ericksen, \emph{Continuum theory of nematic liquid crystals.}  Res. Mechanica 21 (1987), 381-392.

\bibitem{Galdi}
G. P.  Galdi,  \emph{An introduction to the mathematical theory of the Navier-Stokes equations.Vol.
I. Linearized steady problems.} Springer-Verlag, New York, 1994.


\bibitem{HK}
R. Hardt, D. Kinderlehrer, \emph{Mathematical Questions of Liquid
Crystal Theory.}  The IMA Volumes in Mathematics and its
Applications 5, Springer-Verlag, New York, 1987.

\bibitem{HKL}
R. Hardt, D. Kinderlehrer,  F. Lin,  \emph{Existence and partial
regularity of static liquid crystal configurations.} Comm. Math.
Phys. 105 (1986), 547-570.


\bibitem{OAL}
O. A. Ladyzhenskaya, N. A. Solonnikov, N. N. Uraltseva,  \emph{Linear and quasilinear equations of parabolic type.} Transl. Math. Monographs, Vol. 23, American Mathematical Society, 1968.

 \bibitem{Leslie1}
F. M. Leslie,  \emph{Some constitutive equations for liquid
crystals.}  Arch. Rational Mech. Anal. 28 (1968), 265-283.


\bibitem{Lin2}
F. H. Lin,  \emph{Nonlinear theory of defects in nematic liquid
crystals; phase transition and flow phenomena.}  Comm. Pure. Appl.
Math. 42 (1989),  789-814.


\bibitem{LL} F. H.  Lin, C. Liu,
\emph{Nonparabolic dissipative systems modeling the flow of liquid
crystals.} Comm. Pure Appl. Math. 48 (1995), 501-537.

\bibitem{LL2} F. H.  Lin, C. Liu,
\emph{Partial regularity of the dynamic system modeling the flow
of liquid crystals.} Discrete Contin. Dynam. Systems 2 (1996),
1-22.


\bibitem{LLW} F. H.  Lin, J. Lin and C. Wang,
\emph{Liquid crystal flows in two dimensions.}  Arch. Ration. Mech. Anal. 197 (2010), 297-336.

\bibitem{L} P. L. Lions,
\emph{Mathematical topics in fluid mechanics.} Vol. 1.
Incompressible models. Oxford Lecture Series in Mathematics and its
Applications, 3. Oxford Science Publications. The Clarendon Press,
Oxford University Press, New York, 1996.


\bibitem{LS} C. Liu, J. Shen, \emph{On liquid crystal flows with free-slip boundary conditions.} Discrete and Continuous Dynamical Systems 7 (2001), 307-318.

\bibitem{LW}
C. Liu, N. J. Walkington, \emph{Approximation of liquid crystal
flow.} SIAM J.  Numer.  Anal. 37 (2000), 725-741.



\bibitem{VAS} V. A. Solonnikov,
\emph{$L^p$-estimates for solutions to the initial boundary-value
problem for the generalized Stokes system in a bounded domain.}
Journal of Mathematical Sciences. 105 (2001), 2448-2484.

\bibitem{SL} H. Sun, C.  Liu,
\emph{On energetic variational approaches in modeling the nematic
liquid crystal flows.} Discrete Contin. Dyn. Syst. 23 (2009),
455-475.

\bibitem{XZL} X. Xu, L.Y. Zhao and C. Liu,
\emph{Axisymmetric Solutions to Coupled Navier-Stokes/Allen-Cahn Equations.}
SIAM J. Math. Anal.  41 (2010), no. 6, 2246-2282.

\end{thebibliography}
\end{document}